\newtheorem{theorem}{Theorem}[section]
\newtheorem{lemma}[theorem]{Lemma}
\newtheorem{theoremandnotation}[theorem]{Theorem and Notation}
\newenvironment{proof of claim}{\noindent\textbf{Proof of the claim.}}{\hfill{$\square$}\newline}
\theoremstyle{definition}
\newtheorem{definition}[theorem]{Definition}
\theoremstyle{remark}
\numberwithin{equation}{section}
\begin{document}

\title[On closed non-vanishing ideals in $C_B(X)$]{On closed non-vanishing ideals in $C_B(X)$}

\author[A. Khademi and M.R. Koushesh]{A. Khademi and M.R. Koushesh$^*$}

\address{\textbf{[First author]} Department of Mathematical Sciences, Isfahan University of Technology, Isfahan 84156--83111, Iran.}

\email{a.khademi@math.iut.ac.ir}

\address{\textbf{[Second author]} Department of Mathematical Sciences, Isfahan University of Technology, Isfahan 84156--83111, Iran and School of Mathematics, Institute for Research in Fundamental Sciences (IPM), P.O. Box: 19395--5746, Tehran, Iran.}

\email{koushesh@cc.iut.ac.ir}

\thanks{$^*$Corresponding author}

\thanks{The research of the second author was in part supported by a grant from IPM (No. 95030418).}

\subjclass[2010]{Primary 54D35; Secondary 46J10, 46J25, 54C35.}


\keywords{Stone--\v{C}ech compactification; Local connectedness; Total disconnectedness; Zero-dimensionality; Strong zero-dimensionality; Total separatedness; Extremal disconnectedness; Algebra of continuous functions.}

\begin{abstract}
Let $X$ be a completely regular topological space. We study closed ideals $H$ of $C_B(X)$, the normed algebra of bounded continuous scalar-valued mappings on $X$ equipped with pointwise addition and multiplication and the supremum norm, which are non-vanishing, in the sense that, there is no point of $X$ at which every element of $H$ vanishes. This is done by studying the (unique) locally compact Hausdorff space $Y$ associated to $H$ in such a way that $H$ and $C_0(Y)$ are isometrically isomorphic. We are interested in various connectedness properties of $Y$. In particular, we present necessary and sufficient (algebraic) conditions for $H$ such that $Y$ satisfies (topological) properties such as locally connectedness, total disconnectedness, zero-dimensionality, strong zero-dimensionality, total separatedness or extremal disconnectedness.
\end{abstract}

\maketitle

\tableofcontents

\section{Introduction}

Throughout this article by a \textit{space} we mean a topological space. Completely regular spaces are assumed to be Hausdorff. The field of scalars, denoted by $\mathbb{F}$, is either $\mathbb{R}$ or $\mathbb{C}$ and is fixed throughout our discussion. For a space $X$ we denote by $C_B(X)$ the set of all bounded continuous scalar-valued mappings on $X$. The set $C_B(X)$ is a Banach algebra when equipped with pointwise addition and multiplication and the supremum norm. We denote by $C_0(X)$ the normed subalgebra of $C_B(X)$ consisting of mappings which vanish at infinity.

The Banach algebra $C_B(X)$ plays a fundamental role in both topology and analysis. Here we assume the minimal assumption of complete regularity on the space $X$, and concentrate on the study of closed ideals $H$ of $C_B(X)$ which are non-vanishing, in the sense that, there is no point of $X$ at which every element of $H$ vanishes. This is done by studying the (unique) locally compact Hausdorff space $Y$ which is associated to $H$ in such a way that $H$ and $C_0(Y)$ are isometrically isomorphic. The space $Y$ is constructed by the second author in \cite{Kou6} as a subspace of the Stone--\v{C}ech compactification of $X$, and coincides with the spectrum of $H$ (considered as a Banach algebra) in the case when the field of scalars is $\mathbb{C}$. The known (and simple) structure of $Y$ enables us to study its properties. We are particularly interested in connectedness properties of $Y$. More specifically, we find necessary and sufficient (algebraic) conditions for $H$ such that $Y$ satisfies (topological) properties such as locally connectedness, total disconnectedness, zero-dimensionality, strong zero-dimensionality, total separatedness or extremal disconnectedness.

Throughout this article we will make critical use of the theory of the Stone--\v{C}ech compactification. We state some basic properties of the Stone--\v{C}ech compactification in the following and refer the reader to the standard texts \cite{GJ} and \cite{W} for further reading on the subject.

\subsubsection*{The Stone--\v{C}ech compactification}

Let $X$ be completely regular space. By a \textit{compactification} of $X$ we mean a compact Hausdorff space $\alpha X$ which contains $X$ as a dense subspace. Among compactifications of $X$ there is the ``largest'' one called the \textit{Stone--\v{C}ech compactification} (and denoted by $\beta X$) which is characterized by the property that every bounded continuous mapping $f:X\rightarrow\mathbb{F}$ is extendible to a continuous mapping $F:\beta X\rightarrow\mathbb{F}$. For a bounded continuous mapping $f:X\rightarrow\mathbb{F}$ we denote by $f_\beta$ or $f^\beta$ the (unique) continuous extension of $f$ to $\beta X$.

The following is a few of the basic properties of $\beta X$. We use these properties throughout without explicitly referring to them.

\begin{itemize}
  \item $\beta M=\beta X$ if $X\subseteq M\subseteq\beta X$.
  \item $X$ is locally compact if and only if $X$ is open in $\beta X$.
  \item $\mathrm{cl}_{\beta X}M$ is open-and-closed in $\beta X$ if $M$ is open-and-closed in $X$.
\end{itemize}

\section{Preliminaries}

In \cite{Kou6}, the second author has studied closed non-vanishing ideals of $C_B(X)$, where $X$ is a completely regular space, by relating them to certain subspaces of the Stone--\v{C}ech compactification of $X$. The precise statement is as follows.

Recall that for a mapping $f:Y\rightarrow\mathbb{F}$ the \textit{cozeroset} of $f$, denoted by $\mathrm{Coz}(f)$, is the set of all $y$ in $Y$ such that $f(y)\neq 0$.

\begin{theoremandnotation}\label{TRES}
Let $X$ be a completely regular space. Let $H$ be a non-vanishing closed ideal in $C_B(X)$. Let
\[\mathfrak{sp}(H)=\bigcup_{h\in H}\mathrm{Coz}(h_\beta).\]
We call $\mathfrak{sp}(H)$ the spectrum of $H$. The spectrum $\mathfrak{sp}(H)$ is open in $\beta X$, thus, is locally compact, contains $X$ as a dense subspace, and is such that $H$ and $C_0(\mathfrak{sp}(H))$ are isometrically isomorphic. In particular, $\mathfrak{sp}(H)$ coincides with the spectrum of $H$, in the usual sense, if the field of scalars is $\mathbb{C}$.
\end{theoremandnotation}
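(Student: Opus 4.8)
The plan is to transport the problem to $\beta X$ through the canonical isometric algebra isomorphism $f\mapsto f_\beta$ of $C_B(X)$ onto $C(\beta X)$, under which the closed ideal $H$ becomes a closed ideal $\widetilde{H}$ of $C(\beta X)$, and then to apply the Stone--Weierstrass theorem for $C_0$ of a locally compact Hausdorff space. First I would dispose of the elementary assertions: each $\mathrm{Coz}(h_\beta)$ is open in $\beta X$ because $h_\beta$ is continuous, so $\mathfrak{sp}(H)$, being a union of such sets, is open in $\beta X$, and an open subspace of a compact Hausdorff space is locally compact Hausdorff. Non-vanishing of $H$ means that for each $x\in X$ there is $h\in H$ with $h_\beta(x)=h(x)\neq 0$, so $x\in\mathrm{Coz}(h_\beta)\subseteq\mathfrak{sp}(H)$; hence $X\subseteq\mathfrak{sp}(H)\subseteq\beta X$, and density of $X$ in $\beta X$ forces density of $X$ in $\mathfrak{sp}(H)$.

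Next I would construct the isometric isomorphism. Define $\Phi\colon H\to C(\mathfrak{sp}(H))$ by $\Phi(h)=h_\beta|_{\mathfrak{sp}(H)}$, and verify in turn: (i) $\Phi(h)$ vanishes at infinity, because for each $\varepsilon>0$ the set $\{y\in\beta X:\abs{h_\beta(y)}\geq\varepsilon\}$ is closed in the compact space $\beta X$, hence compact, and lies inside $\mathrm{Coz}(h_\beta)\subseteq\mathfrak{sp}(H)$, so it is a compact subset of $\mathfrak{sp}(H)$; (ii) $\Phi$ is an algebra homomorphism, since $(g+h)_\beta=g_\beta+h_\beta$ and $(gh)_\beta=g_\beta h_\beta$; (iii) $\Phi$ is isometric, since $X\subseteq\mathfrak{sp}(H)\subseteq\beta X$ together with density of $X$ and continuity of $h_\beta$ give $\|h\|=\sup_X\abs{h}=\sup_{\mathfrak{sp}(H)}\abs{h_\beta}=\sup_{\beta X}\abs{h_\beta}$. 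In particular $\Phi$ is injective, and since it is isometric and $H$ is complete, the image $J:=\Phi(H)$ is a closed subalgebra of $C_0(\mathfrak{sp}(H))$.

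The substantive part, which I expect to be the main obstacle, is showing $J=C_0(\mathfrak{sp}(H))$, for which I would check the hypotheses of the locally compact Stone--Weierstrass theorem. That $J$ vanishes nowhere on $\mathfrak{sp}(H)$ is immediate from the definition of $\mathfrak{sp}(H)$ as the union of the cozerosets. For point separation, given distinct $y_1,y_2\in\mathfrak{sp}(H)$, choose $h\in H$ with $h_\beta(y_1)\neq 0$, and, using that $\beta X$ is Hausdorff and $C(\beta X)=\{g_\beta:g\in C_B(X)\}$, choose $g\in C_B(X)$ with $g_\beta(y_1)\neq g_\beta(y_2)$; then $gh\in H$ because $H$ is an ideal, and $(gh)_\beta(y_i)=g_\beta(y_i)h_\beta(y_i)$, so $\Phi(gh)$ separates $y_1$ and $y_2$ after possibly interchanging them --- it is precisely here that the ideal property, and not merely closure under multiplication within $H$, is exploited. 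Finally, when $\mathbb{F}=\mathbb{C}$, one needs $J$ to be closed under conjugation; this holds because a closed ideal of the commutative $C^\ast$-algebra $C(\beta X)$ is self-adjoint (concretely, if $h\in H$ then $(\abs{h}^2+1/n)^{-1}\abs{h}^2\overline{h}\in H$ and this sequence converges uniformly to $\overline{h}$), so $\overline{\Phi(h)}=\Phi(\overline{h})\in J$. Stone--Weierstrass then gives $J=C_0(\mathfrak{sp}(H))$, so $\Phi$ is an isometric isomorphism of $H$ onto $C_0(\mathfrak{sp}(H))$.

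For the last assertion, with $H\cong C_0(\mathfrak{sp}(H))$ in hand it remains to recall that the Gelfand spectrum --- the space of nonzero multiplicative linear functionals equipped with the weak-$\ast$ topology --- of $C_0(Y)$, for $Y$ locally compact Hausdorff, is homeomorphic to $Y$ via $y\mapsto\mathrm{ev}_y$. Transporting this across $\Phi$, the character of $H$ corresponding to a point $y\in\mathfrak{sp}(H)$ is $h\mapsto h_\beta(y)$, and the assignment $y\mapsto(h\mapsto h_\beta(y))$ is the desired homeomorphism of $\mathfrak{sp}(H)$ onto the spectrum of $H$.
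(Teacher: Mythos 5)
Your proof is essentially correct, but note first that the paper itself gives no proof of this statement: it is quoted from Theorem 3.2.5 of \cite{Kou6}, so there is no in-paper argument to match yours against. Your route --- transporting $H$ along $f\mapsto f_\beta$ into $C(\beta X)$, checking that $h\mapsto h_\beta|_{\mathfrak{sp}(H)}$ is an isometric homomorphism into $C_0(\mathfrak{sp}(H))$ with closed image, and then invoking the locally compact Stone--Weierstrass theorem (plus self-adjointness of closed ideals in the complex case, and Gelfand duality for the final assertion) --- is a genuinely different, softer proof than the one in the cited source, whose flavor is visible in the lemmas this paper reproduces and reuses: Lemma \ref{JJHF} is Lemma 3.2.2 of \cite{Kou6}, and the surjectivity of the restriction map is in effect re-derived here as Lemma \ref{SHK}, i.e.\ one extends $f\in C_0(\mathfrak{sp}(H))$ by zero over $\beta X$, restricts to $X$, and shows membership in $H$ by approximating with products, using compactness of the sets $|f_\beta|^{-1}\big([1/n,\infty)\big)$ and the ideal property. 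Your argument is shorter and rests on standard functional-analytic machinery; the constructive approach has the advantage that its ingredients (Lemmas \ref{GGF}--\ref{SHK}) are exactly the tools the rest of the paper uses repeatedly. One small repair in your Stone--Weierstrass step: choosing $g$ with merely $g_\beta(y_1)\neq g_\beta(y_2)$ does not guarantee $g_\beta(y_1)h_\beta(y_1)\neq g_\beta(y_2)h_\beta(y_2)$ (the two products could coincide); instead, since $\beta X$ is compact Hausdorff, pick $g\in C_B(X)$ with $g_\beta(y_1)=1$ and $g_\beta(y_2)=0$, so that $(gh)_\beta(y_1)=h_\beta(y_1)\neq 0=(gh)_\beta(y_2)$. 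The complex-case self-adjointness argument via $h\cdot\overline{h}^{\,2}\big(|h|^2+1/n\big)^{-1}\rightarrow\overline{h}$ and the identification of $\mathfrak{sp}(H)$ with the Gelfand spectrum of $C_0(\mathfrak{sp}(H))$ are both fine.
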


The simple structure of $\mathfrak{sp}(H)$ in the above theorem enables us to study its properties. This is done by relating (topological) properties of $\mathfrak{sp}(H)$ to (algebraic) properties of $H$. Among various results of this type in \cite{Kou6}, the following two theorems are concerned with connectedness properties of $\mathfrak{sp}(H)$.

\begin{theorem}\label{HJGS}
Let $X$ be a completely regular space. Let $H$ be a non-vanishing closed ideal in $C_B(X)$. The following are equivalent:
\begin{itemize}
\item[\rm(1)] $\mathfrak{sp}(H)$ is connected.
\item[\rm(2)] $H$ is indecomposable, in the sense that
\[H\neq I\oplus J\]
for any non-zero ideals $I$ and $J$ of $C_B(X)$.
\end{itemize}
\end{theorem}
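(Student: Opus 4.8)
The plan is to work throughout with the locally compact Hausdorff space $Y=\mathfrak{sp}(H)$ and the isometric isomorphism $\phi\colon H\to C_0(Y)$ furnished by Theorem \ref{TRES}, in the concrete form $\phi(h)=h_\beta|_Y$. First I would record the elementary facts I intend to use repeatedly: $\phi$ is an algebra isomorphism, so it maps ideals of $H$ bijectively onto ideals of $C_0(Y)$ and preserves finite sums and intersections; any ideal of $C_B(X)$ that is contained in $H$ is in particular an ideal of $H$ (since $HI\subseteq C_B(X)I\subseteq I$); for $f,h\in C_B(X)$ one has $(fh)_\beta=f_\beta h_\beta$ on $\beta X$, because both sides are continuous and agree on the dense subset $X$; and, directly from the definition $\mathfrak{sp}(H)=\bigcup_{h\in H}\mathrm{Coz}(h_\beta)$, for every $y\in Y$ there is some $h\in H$ with $\phi(h)(y)=h_\beta(y)\neq 0$. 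The whole argument is then just the standard correspondence between clopen partitions of $Y$ and direct-sum decompositions of $C_0(Y)$, pushed back through $\phi$.

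For the implication $(1)\Rightarrow(2)$ I would argue by contraposition. Assume $H=I\oplus J$ with $I,J$ non-zero ideals of $C_B(X)$; then $I,J\subseteq H$ and, transporting through $\phi$, we get $C_0(Y)=\phi(I)\oplus\phi(J)$ with $\phi(I),\phi(J)$ non-zero ideals of $C_0(Y)$. Since $\phi(I)\phi(J)\subseteq\phi(I)\cap\phi(J)=\phi(I\cap J)=\{0\}$, every $f\in\phi(I)$ and $g\in\phi(J)$ satisfy $fg=0$, i.e.\ $\mathrm{Coz}(f)\cap\mathrm{Coz}(g)=\emptyset$. Put $U=\bigcup_{f\in\phi(I)}\mathrm{Coz}(f)$ and $V=\bigcup_{g\in\phi(J)}\mathrm{Coz}(g)$. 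These are open and disjoint; they are non-empty because $\phi(I)$ and $\phi(J)$ are non-zero; and they cover $Y$ because for $y\in Y$ there is $h\in C_0(Y)$ with $h(y)\neq 0$, and writing $h=f+g$ with $f\in\phi(I)$, $g\in\phi(J)$ forces $f(y)\neq 0$ or $g(y)\neq 0$. Hence $Y=U\sqcup V$ is a non-trivial partition into open sets, so $Y$ is disconnected.

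For $(2)\Rightarrow(1)$, again by contraposition, suppose $Y$ is disconnected, $Y=U\sqcup V$ with $U,V$ non-empty, open, hence open-and-closed in $Y$. Then $\chi_U,\chi_V$ are bounded continuous on $Y$, so $I_0=\{f\in C_0(Y):f|_V=0\}$ and $J_0=\{f\in C_0(Y):f|_U=0\}$ are ideals of $C_0(Y)$ with $I_0\cap J_0=\{0\}$ and $f=\chi_Uf+\chi_Vf\in I_0+J_0$ for every $f\in C_0(Y)$ (note $\chi_Uf\in C_0(Y)$ since $|\chi_Uf|\le|f|$); both are non-zero since $U,V$ are non-empty (again by the definition of $\mathfrak{sp}(H)$). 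Set $I=\phi^{-1}(I_0)$ and $J=\phi^{-1}(J_0)$, non-zero ideals of $H$ with $H=I\oplus J$. The point requiring the concrete form of $\phi$ is that $I$ and $J$ are actually ideals of $C_B(X)$: for $k\in C_B(X)$ and $h\in I$ we have $kh\in H$ and $\phi(kh)=(kh)_\beta|_Y=(k_\beta|_Y)\cdot\phi(h)$, which still vanishes on $V$ because $\phi(h)$ does, so $kh\in I$; symmetrically for $J$. Thus $H$ decomposes, contradicting $(2)$.

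I expect the only genuine subtlety — the ``main obstacle'' — to be exactly this last verification: that the summands coming from a clopen partition of $Y$ are ideals of the ambient algebra $C_B(X)$, not merely of $H$. This is where one must use that the isomorphism of Theorem \ref{TRES} is implemented by restriction of Stone--\v{C}ech extensions, together with the multiplicativity $(fh)_\beta=f_\beta h_\beta$. Everything else is routine bookkeeping with cozerosets and with the correspondence between open-and-closed subsets of $Y$ and direct-sum decompositions of $C_0(Y)$.
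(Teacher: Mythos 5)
Your proof is correct, but it takes a genuinely different route from the paper's. The paper quotes Theorem \ref{HJGS} from \cite{Kou6} and then reproves a strictly more general fact, Lemma \ref{OPF}: for an arbitrary closed ideal $G$ of $C_B(X)$ (not assumed non-vanishing), $\lambda_GX$ is connected iff $G$ is indecomposable. Its proof works entirely inside $C_B(X)$ and $\beta X$: in the hard direction it sets $G_i=\{g\in G: g^\beta|_{\beta X\setminus U_i}=0\}$ and proves $G=G_1+G_2$ by an approximation argument (Urysohn functions $\phi^i_n$ equal to $1$ on $U_i\cap|g_\beta|^{-1}([1/n,\infty))$, an estimate $\|g^\beta_n-g^\beta\|\leq 1/n$, and a Cauchy-sequence argument), precisely because in that generality the clopen pieces of a separation of $\lambda_GX$ need not trace out clopen subsets of $X$, so one cannot simply multiply by characteristic functions. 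You instead transport everything through the isometric isomorphism $H\cong C_0(\mathfrak{sp}(H))$ of Theorem \ref{TRES}, where a separation $\mathfrak{sp}(H)=U\sqcup V$ gives continuous $\chi_U,\chi_V$ and the decomposition $f=\chi_Uf+\chi_Vf$ is immediate; you then correctly isolate and handle the one real subtlety, namely that the pulled-back summands are ideals of $C_B(X)$ and not merely of $H$, using the concrete form $\phi(h)=h_\beta|_{\mathfrak{sp}(H)}$ and $(kh)_\beta=k_\beta h_\beta$. Two remarks on what each approach buys. First, your argument leans on the concrete description and, crucially, the surjectivity of $\phi$ onto $C_0(\mathfrak{sp}(H))$ (needed to pull $\chi_U\phi(h)$ back into $H$); Theorem \ref{TRES} as stated only asserts the existence of an isometric isomorphism, though the concrete form is indeed how it is built in \cite{Kou6} and can be recovered from the paper's own Lemma \ref{SHK} by extending elements of $C_0(\mathfrak{sp}(H))$ by zero over $\beta X$ — so the analytic work that the paper does by hand (Lemmas \ref{JJHF} and \ref{SHK}, or the $\phi^i_n$ approximation) is exactly what is hidden inside the surjectivity you invoke. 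Second, your method is shorter and cleaner for the non-vanishing case, but it does not directly yield the paper's Lemma \ref{OPF} for arbitrary closed ideals $G$, which is what the rest of the paper actually uses (e.g., in Theorem \ref{KHD}); the paper's heavier direct argument is the price of that extra generality.
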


\begin{theorem}\label{JFDF}
Let $X$ be a completely regular space. Let $H$ be a non-vanishing closed ideal in $C_B(X)$. The following are equivalent:
\begin{itemize}
\item[\rm(1)] Components of $\mathfrak{sp}(H)$ are open in $\mathfrak{sp}(H)$.
\item[\rm(2)] We have
\[H=\overline{\bigoplus_{i\in I}H_i},\]
where $H_i$ is an indecomposable closed ideal in $C_B(X)$ for any $i$ in $I$.
\end{itemize}
Here the bar denotes the closure in $C_B(X)$.
\end{theorem}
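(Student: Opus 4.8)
The plan is to transport the entire question through the isometric isomorphism $H\cong C_0(Y)$ furnished by Theorem \ref{TRES}, where $Y=\mathfrak{sp}(H)$, and to use the correspondence $U\mapsto C_0(U):=\{f\in C_B(X):f_\beta\equiv 0\text{ on }\beta X\setminus U\}$ between open subsets $U$ of $\beta X$ and closed ideals of $C_B(X)$ (implicit in \cite{Kou6}; restricted to open subsets of $Y$ it is the usual description of the closed ideals of $C_0(Y)$). Under this dictionary one has, for any family $\{U_i\}$ of open sets, that $C_0(U_i)C_0(U_j)=\{0\}$ iff $U_i\cap U_j=\emptyset$ (evaluate at a common point), and $\sum_i C_0(U_i)\subseteq C_0(\bigcup_i U_i)$, with $\overline{\sum_i C_0(U_i)}=C_0(Y)$ iff $\bigcup_i U_i=Y$; the ``only if'' is clear by evaluation at a point outside $\bigcup_i U_i$, and the ``if'' is a small topological lemma I would record first: if $Y$ is locally compact Hausdorff and $\{Y_i\}_{i\in I}$ is a partition of $Y$ into (hence clopen) open sets, then $C_0(Y)=\overline{\bigoplus_{i\in I}C_0(Y_i)}$, proved by approximating a given $f\in C_0(Y)$ on the compact set $\{|f|\ge\varepsilon\}$, which is covered by finitely many of the $Y_i$.

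A second ingredient is the equivalence, for open $U\subseteq\beta X$, that the closed ideal $C_0(U)$ of $C_B(X)$ is indecomposable if and only if $U$ is connected. Theorem \ref{HJGS} does not apply directly, since the ideals in play will in general be vanishing, but the argument is short. If $U=A\sqcup B$ with $A,B$ nonempty open, then $A,B$ are clopen in $U$ and $C_0(U)=C_0(A)\oplus C_0(B)$ is a decomposition into nonzero closed ideals of $C_B(X)$. Conversely, if $C_0(U)=I\oplus J$ for nonzero ideals $I,J$ of $C_B(X)$, write $\overline I=C_0(V)$ and $\overline J=C_0(W)$ with $V,W$ open in $U$; from $IJ\subseteq I\cap J=\{0\}$ we get $C_0(V)C_0(W)=\{0\}$, hence $V\cap W=\emptyset$, while $C_0(U)=I+J\subseteq C_0(V)+C_0(W)\subseteq C_0(V\cup W)$ forces $V\cup W=U$, so $U$ is disconnected.

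For (1)$\Rightarrow$(2): if the components $\{Y_i\}_{i\in I}$ of $Y$ are open, then each $Y_i$ is clopen and connected and $Y=\bigsqcup_i Y_i$. Set $H_i=\{f\in C_B(X):f_\beta\equiv 0\text{ on }\beta X\setminus Y_i\}$; since $Y_i$ and $\beta X\setminus Y_i$ are closed in $\beta X$ and overlap in the frontier of $Y_i$, a routine pasting argument shows $f\mapsto f_\beta|_{Y_i}$ is an isometric isomorphism $H_i\cong C_0(Y_i)$, so $H_i$ is a closed ideal of $C_B(X)$, indecomposable by the second ingredient. The topological lemma applied to $Y$ and transported through $H\cong C_0(Y)$ gives $H=\overline{\bigoplus_{i\in I}H_i}$, which is (2). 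For (2)$\Rightarrow$(1): write $H=\overline{\bigoplus_{i\in I}H_i}$ with each $H_i$ an indecomposable closed ideal of $C_B(X)$. Since $H_i\subseteq H$ and $H$ is a closed ideal of $C_B(X)$, each $H_i$ is a closed ideal of $H\cong C_0(Y)$, hence $H_i\cong C_0(U_i)$ for a unique open $U_i\subseteq Y$. Directness gives $H_iH_j\subseteq H_i\cap H_j=\{0\}$, so $U_i\cap U_j=\emptyset$ for $i\ne j$; transporting $\overline{\bigoplus_i H_i}=H$ to $\overline{\bigoplus_i C_0(U_i)}=C_0(Y)$ forces $\bigcup_i U_i=Y$, so $\{U_i\}$ partitions $Y$ into clopen sets, and indecomposability of each $H_i$ forces $U_i$ connected. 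Finally, for $y\in Y$ the unique $U_i$ containing $y$ is connected, hence contained in the component of $y$; conversely that component is connected and meets the clopen set $U_i$, hence is contained in $U_i$. So the component of $y$ equals $U_i$ and is open, which is (1).

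I expect the main obstacle to be the careful management of the nested objects $C_B(X)\supseteq H\cong C_0(Y)$: specifically, that the ideals $H_i$ occurring in (2) are genuinely vanishing ideals of $C_B(X)$, so Theorem \ref{HJGS} is unavailable for them and the indecomposability--connectedness equivalence must be reproved in the form needed; that ``closed ideal of $H$'' and ``closed ideal of $C_B(X)$ contained in $H$'' coincide (so that the $H_i$ really correspond to open subsets of $Y$); and that one obtains the exact equality $\bigcup_i U_i=Y$ rather than mere density. Once the closed-ideal/open-set dictionary and the topological lemma are in place, the rest is bookkeeping.
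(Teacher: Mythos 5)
Your proposal is correct and runs along essentially the same lines as the machinery this paper develops: your open-set/closed-ideal dictionary is Lemmas \ref{HJGF}, \ref{SHK} and \ref{HJGHF}, your indecomposability--connectedness equivalence for possibly vanishing closed ideals is Lemma \ref{OPF}, and your clopen-partition approximation lemma is exactly the compactness argument in the proof of Theorem \ref{KHD} (the paper itself quotes Theorem \ref{JFDF} from \cite{Kou6} rather than reproving it). One small slip worth fixing: $Y_i$ is in general only open, not closed, in $\beta X$, so the pasting for $H_i\cong C_0(Y_i)$ should be done with the closed sets $\mathrm{cl}_{\beta X}Y_i$ and $\beta X\setminus Y_i$, the frontier being harmless because the functions involved vanish there; this does not affect the argument.
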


The above few theorems serve as a starting point for our present study of non-vanishing closed ideals of $C_B(X)$. Examples of non-vanishing closed ideals in $C_B(X)$ (for various completely regular spaces $X$) may be found in Part 3 of \cite{Kou6}. (See \cite{FK}, and \cite{Kou2}--\cite{Kou4} for other relevant examples and results.)

\section{Local connectedness of the spectrum}

In this section we provide a necessary and sufficient condition for the spectrum of a non-vanishing closed ideal of $C_B(X)$ to be locally connected. (Recall that a space $Y$ is called \textit{locally connected} if every neighborhood of any point of $Y$ contains a connected open neighborhood.) Here, as usual, $X$ is a completely regular space and $C_B(X)$ is endowed with pointwise addition and multiplication and the supremum norm. Our theorem is motivated by Theorem 3.2.12 of \cite{Kou6} (Theorem \ref{JFDF}) which gives a necessary and sufficient condition such that all (connected) components of the spectrum of a non-vanishing closed ideal of $C_B(X)$ are open. (Observe that for a space $Y$ the requirement that all components are open is equivalent to saying that every point of $Y$ has a connected open neighborhood.)

Before we proceed, we make the following known simple observation which will be used throughout this article, mostly without explicit reference.

\begin{lemma}\label{LKHGF}
Let $X$ be a completely regular space. Then for any $f$ and $g$ in $C_B(X)$ and scalar $r$ we have
\begin{itemize}
\item[\rm(1)] $(f+g)_\beta=f_\beta+g_\beta$.
\item[\rm(2)] $(fg)_\beta=f_\beta g_\beta$.
\item[\rm(3)] $(rf)_\beta=rf_\beta$.
\item[\rm(4)] $\overline{f}_\beta=\overline{f_\beta}$.
\item[\rm(5)] $|f|_\beta=|f_\beta|$.
\item[\rm(6)] $\|f_\beta\|=\|f\|$.
\end{itemize}
\end{lemma}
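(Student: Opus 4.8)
The plan is to reduce all of (1)--(6) to a single principle: since $X$ is dense in $\beta X$ and the scalar field $\mathbb{F}$ is Hausdorff, two continuous mappings $\beta X\to\mathbb{F}$ that agree on $X$ must coincide. To prove each of (1)--(5) I would therefore exhibit, on the right-hand side, a continuous mapping $\beta X\to\mathbb{F}$ whose restriction to $X$ is the bounded continuous mapping whose unique continuous extension is, by definition, the left-hand side; the asserted equality then follows at once from this uniqueness.

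Concretely, for (1) the mapping $f_\beta+g_\beta$ is continuous on $\beta X$, being a sum of two continuous $\mathbb{F}$-valued mappings, and its value at $x\in X$ is $f(x)+g(x)=(f+g)(x)$; since $f+g\in C_B(X)$, its unique continuous extension to $\beta X$ is $(f+g)_\beta$, so $(f+g)_\beta=f_\beta+g_\beta$. Parts (2), (3), (4) and (5) have exactly the same shape, using that multiplication $\mathbb{F}\times\mathbb{F}\to\mathbb{F}$, scalar multiplication by $r$, conjugation, and the modulus $|\cdot|$ are continuous, so that $f_\beta g_\beta$, $rf_\beta$, $\overline{f_\beta}$ and $|f_\beta|$ are continuous on $\beta X$ and restrict on $X$ to $fg$, $rf$, $\overline{f}$ and $|f|$ respectively; in each case the restricted mapping lies in $C_B(X)$ (a product or scalar multiple of bounded mappings is bounded, and $|f|$ and $\overline{f}$ are bounded whenever $f$ is), so its continuous extension is the mapping named. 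Since $\beta X$ is compact, these right-hand sides are automatically bounded, though this is not needed for the argument.

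For (6), the inequality $\|f\|\le\|f_\beta\|$ is immediate because $X\subseteq\beta X$ and $f_\beta$ restricts to $f$ on $X$. For the reverse inequality I would use density of the range: since $f_\beta$ is continuous and $\mathrm{cl}_{\beta X}X=\beta X$, we get $f_\beta(\beta X)\subseteq\mathrm{cl}_{\mathbb{F}}\big(f_\beta(X)\big)=\mathrm{cl}_{\mathbb{F}}\big(f(X)\big)$, and since $\lambda\mapsto|\lambda|$ is continuous, $\|f_\beta\|=\sup_{\beta X}|f_\beta|\le\sup\{|\lambda|:\lambda\in\mathrm{cl}_{\mathbb{F}}f(X)\}=\sup_{X}|f|=\|f\|$. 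Alternatively, (6) follows from (5) once one observes that the range of $|f|_\beta$ is contained in $[0,\|f\|]$.

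There is essentially no serious obstacle here: the lemma is a routine consequence of the universal property of $\beta X$ together with standard facts about dense subsets and Hausdorff targets. The only point deserving a moment's care is checking at each step that the mapping being extended genuinely belongs to $C_B(X)$, so that the notation $(\cdot)_\beta$ is meaningful, and that the proposed mapping on $\beta X$ is continuous with the correct restriction — both of which are straightforward.
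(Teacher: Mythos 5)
Your proposal is correct and follows essentially the same route as the paper: parts (1)--(5) by uniqueness of continuous extensions agreeing on the dense subspace $X$ of $\beta X$, and part (6) by noting $\|f\|\leq\|f_\beta\|$ trivially while the reverse inequality follows from $f_\beta(\beta X)=f_\beta(\mathrm{cl}_{\beta X}X)$ being contained in the closure of $f(X)$, exactly as in the paper's computation with $|f_\beta|$.
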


\begin{proof}
Observe that $(f+g)_\beta$ and $f_\beta+g_\beta$ are identical, as they are scalar-valued continuous mappings which coincide with $f+g$ on the dense subspace $X$ of $\beta X$. That (2), (3), (4) and (5) hold is analogous.

(6). It is clear that $\|f\|\leq\|f_\beta\|$, as $f_\beta$ extends $f$. Also $\|f_\beta\|\leq\|f\|$, as \[|f_\beta|(\beta X)=|f_\beta|(\mathrm{cl}_{\beta X}X)\subseteq\overline{|f_\beta|(X)}=\overline{|f|(X)}\subseteq\big[0,\|f\|\big],\]
where the bar denotes the closure in $\mathbb{R}$.
\end{proof}

We begin with the following definition.

\begin{definition}\label{GHG}
Let $X$ be a completely regular space. For an ideal $G$ of $C_B(X)$ let
\[\lambda_GX=\bigcup_{g\in G}\mathrm{Coz}(g_\beta).\]
\end{definition}

Observe that by Theorem 3.2.5 of \cite{Kou6} (Theorem \ref{TRES}) for a completely regular space $X$ and a non-vanishing closed ideal $H$ of $C_B(X)$ we have
\[\mathfrak{sp}(H)=\lambda_HX.\]

The following lemma simplifies certain proofs.

\begin{lemma}\label{DFH}
Let $X$ be a completely regular space. For an ideal $G$ of $C_B(X)$ we have
\[\lambda_GX=\bigcup_{g\in G}|g_\beta|^{-1}\big((1,\infty)\big).\]
\end{lemma}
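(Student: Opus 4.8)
The plan is to show the two sets coincide by proving both inclusions, exploiting the fact that multiplying an element of the ideal $G$ by a suitable scalar rescales its extension. The inclusion $\bigcup_{g\in G}|g_\beta|^{-1}\big((1,\infty)\big)\subseteq\lambda_GX$ is immediate, since $|g_\beta(x)|>1$ in particular forces $g_\beta(x)\neq 0$, so every point of the right-hand union already lies in $\mathrm{Coz}(g_\beta)$ for the same $g$, hence in $\lambda_GX$ by Definition \ref{GHG}.

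For the reverse inclusion, suppose $x\in\lambda_GX$, so there is some $g\in G$ with $g_\beta(x)\neq 0$; put $\delta=|g_\beta(x)|>0$. The idea is to amplify $g$: since $G$ is an ideal of $C_B(X)$ it is in particular a vector subspace, so $rg\in G$ for every scalar $r$, and choosing a scalar $r$ with $|r|>1/\delta$ (for instance $r=2/\delta$) we get, using part (3) of Lemma \ref{LKHGF} and part (5) of Lemma \ref{LKHGF}, that $|(rg)_\beta(x)|=|r|\,|g_\beta(x)|=|r|\delta>1$. Hence $x\in|(rg)_\beta|^{-1}\big((1,\infty)\big)$ with $rg\in G$, which places $x$ in the right-hand union. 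This establishes $\lambda_GX\subseteq\bigcup_{g\in G}|g_\beta|^{-1}\big((1,\infty)\big)$ and completes the proof.

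I do not anticipate a genuine obstacle here; the only point requiring a moment's care is that the argument uses that $G$ is closed under scalar multiplication, which holds because an ideal is by definition a subspace (so this works equally for the real and complex scalar fields). One should also note the harmless fact that an ideal containing a nonzero element is automatically nonzero, but in fact no case distinction is needed: if $x\in\lambda_GX$ then by definition some $g_\beta$ is nonzero at $x$, and the scaling argument applies verbatim. It may be worth remarking, for later use, that the constant $1$ in the statement could be replaced by any positive real, by the same rescaling; but the version with $1$ is what the subsequent proofs invoke.
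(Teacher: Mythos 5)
Your proof is correct and follows essentially the same route as the paper: the nontrivial inclusion is handled by rescaling $g$ inside the ideal so that $|g_\beta|$ exceeds $1$ at the given point. The only cosmetic difference is that the paper multiplies by a positive integer $n$ (which needs nothing beyond additive closure of a ring ideal), whereas you multiply by the scalar $2/\delta$; this is equally valid here since $C_B(X)$ is unital (constant functions lie in it), so any ideal absorbs scalar multiples.
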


\begin{proof}
Clearly, we only need to shows that
\[\lambda_GX\subseteq\bigcup_{g\in G}|g_\beta|^{-1}\big((1,\infty)\big).\]
Let $t$ be in $\lambda_GX$. Then $g_\beta(t)\neq 0$ for some $g$ in $G$. There is a positive integer $n$ such that  $|ng_\beta(t)|>1$. Thus $t$ is in $|(ng)_\beta|^{-1}((1,\infty))$, and $ng$ is in $G$.
\end{proof}

The following lemma plays a crucial role in our future study.

\begin{lemma}\label{HJGF}
Let $X$ be a completely regular space. Let $H$ be a non-vanishing closed ideal in $C_B(X)$. Then the open subspaces of $\mathfrak{sp}(H)$ are exactly those of the form $\lambda_GX$ where $G$ is a closed subideal of $H$. Specifically, for an open subspace $U$ of $\mathfrak{sp}(H)$ we have
\[U=\lambda_GX,\]
where
\begin{equation}\label{OPG}
G=\{g\in H:g_\beta|_{\beta X\setminus U}=0\}.
\end{equation}
\end{lemma}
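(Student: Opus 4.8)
The plan is to prove the two directions of the characterization separately, and within the forward direction to verify that the specific set $G$ defined by \eqref{OPG} is a closed subideal of $H$ with $\lambda_GX=U$. First I would dispose of the easy containment direction: if $G$ is any closed subideal of $H$, then $\lambda_GX\subseteq\lambda_HX=\mathfrak{sp}(H)$, and $\lambda_GX$ is open in $\beta X$ (hence in $\mathfrak{sp}(H)$) because it is a union of cozerosets of continuous functions on $\beta X$; this uses nothing beyond Definition \ref{GHG} and Theorem \ref{TRES}. So the substance is the other direction: given an open $U\subseteq\mathfrak{sp}(H)$, show that the set $G$ in \eqref{OPG} is a closed ideal contained in $H$ and that $\lambda_GX=U$.

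That $G$ is a vector subspace of $H$ is immediate from Lemma \ref{LKHGF}(1),(3), since the condition $g_\beta|_{\beta X\setminus U}=0$ is preserved under sums and scalar multiples. That $G$ is an ideal of $C_B(X)$: if $g\in G$ and $f\in C_B(X)$, then $(fg)_\beta=f_\beta g_\beta$ by Lemma \ref{LKHGF}(2), which vanishes on $\beta X\setminus U$ because $g_\beta$ does; and $fg\in H$ since $H$ is an ideal and $g\in H$. For closedness, suppose $g_n\to g$ in $C_B(X)$ with each $g_n\in G$. Then $g\in H$ since $H$ is closed, and by Lemma \ref{LKHGF}(6) the extensions $(g_n)_\beta$ converge uniformly to $g_\beta$ on $\beta X$; since each $(g_n)_\beta$ is identically $0$ on $\beta X\setminus U$, so is the uniform limit $g_\beta$. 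Hence $g\in G$, and $G$ is a closed subideal of $H$.

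It remains to prove $\lambda_GX=U$, and this is the step I expect to be the main obstacle. The inclusion $\lambda_GX\subseteq U$ is straightforward: if $t\in\lambda_GX$ then $g_\beta(t)\neq0$ for some $g\in G$, so $t\notin\beta X\setminus U$, i.e. $t\in U$. For the reverse inclusion $U\subseteq\lambda_GX$, fix $t\in U$. Since $U$ is open in $\mathfrak{sp}(H)$ and $\mathfrak{sp}(H)$ is open in $\beta X$, the set $U$ is open in $\beta X$; also $t\in\mathfrak{sp}(H)=\lambda_HX$, so there is some $h\in H$ with $h_\beta(t)\neq0$. The difficulty is that $h_\beta$ need not vanish off $U$, so $h$ itself need not lie in $G$; I must manufacture from $h$ a new element of $H$ whose $\beta$-extension is nonzero at $t$ but vanishes on $\beta X\setminus U$. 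The natural device is to multiply $h$ by a suitable $u\in C_B(X)$ whose extension $u_\beta$ is nonzero at $t$ and supported in $U$: by complete regularity of the compact Hausdorff space $\beta X$ (Urysohn's lemma), using that $\{t\}$ and the closed set $\beta X\setminus U$ are disjoint, there is a continuous $v\colon\beta X\to[0,1]$ with $v(t)=1$ and $v\equiv0$ on $\beta X\setminus U$; let $u=v|_X\in C_B(X)$, so $u_\beta=v$. Then $uh\in H$ (as $H$ is an ideal), and $(uh)_\beta=u_\beta h_\beta=vh_\beta$ vanishes on $\beta X\setminus U$, so $uh\in G$; moreover $(uh)_\beta(t)=v(t)h_\beta(t)=h_\beta(t)\neq0$, so $t\in\mathrm{Coz}((uh)_\beta)\subseteq\lambda_GX$. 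This gives $U\subseteq\lambda_GX$ and completes the proof; the only subtlety to check carefully is that $u_\beta=v$, which holds because $v$ is a continuous extension of $u=v|_X$ to $\beta X$ and such extensions are unique.
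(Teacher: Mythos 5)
Your proposal is correct and follows essentially the same route as the paper: the same verification that $G$ from \eqref{OPG} is a closed subideal of $H$ (via preservation of the norm under $\beta$-extension), and the same Urysohn-function multiplication trick $g=h\,v|_X$ to show $U\subseteq\lambda_GX$. Your explicit remarks that $U$ is open in $\beta X$ (since $\mathfrak{sp}(H)$ is) and that $u_\beta=v$ by uniqueness of extensions are exactly the points the paper uses implicitly.
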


\begin{proof}
It is clear from the definition (and the representation of $\mathfrak{sp}(H)$ given in Theorem \ref{TRES}) that every set of the form $\lambda_GX$, where $G$ is a closed subideal of $H$, is open in $\mathfrak{sp}(H)$. Now, let $U$ be an open subspace of $\mathfrak{sp}(H)$. Let $G$ be as defined in (\ref{OPG}). Then $G$ is a subideal of $H$, as one can easily check. To check that $G$ is closed in $C_B(X)$, let $f$ be in $C_B(X)$ such that $g_n\rightarrow f$ for some sequence $g_1,g_2,\ldots$ in $G$. Then $g^\beta_n\rightarrow f^\beta$, as $\|g^\beta_n-f^\beta\|=\|g_n-f\|$ for all positive integers $n$. In particular, $g^\beta_n(t)\rightarrow f^\beta(t)$ for every $t$ in $\beta X$. Therefore $f^\beta$ vanishes on $\beta X\setminus U$, as $g^\beta_n$ does so for every positive integer $n$. Note that $f$ is in $H$, as $f$ is the limit of a sequence in $G$ (and thus in $H$) and $H$ is closed in $C_B(X)$. Therefore $f$ is in $G$.

We now verify that $\lambda_GX=U$. It is clear that $\mathrm{Coz}(g_\beta)\subseteq U$ for any $g$ in $G$. Therefore $\lambda_GX\subseteq U$. To check the reverse inclusion, let $u$ be in $U$. Then ($u$ is in $\mathfrak{sp}(H)$ and thus) $h_\beta(u)\neq 0$ for some $h$ in $H$. Let $F:\beta X\rightarrow [0,1]$ be a continuous mapping with
\[F(u)=1\quad\text{and}\quad F|_{\beta X\setminus U}=0.\]
Let $g=hF|_X$. Then $g$ is in $H$ and $g_\beta=h_\beta F$. Thus $g_\beta|_{\beta X\setminus U}=0$ and therefore $g$ is in $G$. It is clear that $g_\beta(u)\neq 0$ and thus $u$ is in $\lambda_GX$.
\end{proof}

\begin{lemma}\label{GGF}
Let $X$ be a completely regular space. Let $G$ be an ideal in $C_B(X)$. Let $K$ be a compact subspace of $\lambda_GX$. Then
\begin{equation}\label{JGD}
K\subseteq|g_\beta|^{-1}\big((1,\infty)\big)
\end{equation}
for some $g$ in $G$.
\end{lemma}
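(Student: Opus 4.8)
The plan is to cover $K$ by the basic open sets furnished by Lemma \ref{DFH}, pass to a finite subcover using compactness of $K$, and then glue the finitely many resulting functions into a single member of $G$ by summing their squared moduli.

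First, by Lemma \ref{DFH} we have
\[\lambda_GX=\bigcup_{g\in G}\big|g_\beta\big|^{-1}\big((1,\infty)\big),\]
and each set $|g_\beta|^{-1}((1,\infty))$ is open in $\beta X$, since $|g_\beta|$ is continuous. Thus $\{|g_\beta|^{-1}((1,\infty)):g\in G\}$ is an open cover of the compact space $K$, and we may choose $g_1,\ldots,g_n$ in $G$ with
\[K\subseteq\bigcup_{i=1}^n\big|g_{i\beta}\big|^{-1}\big((1,\infty)\big).\]

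Next, let $g=\sum_{i=1}^n\overline{g_i}g_i$. Since $G$ is an ideal of $C_B(X)$, each $g_i$ lies in $G$ and each $\overline{g_i}$ lies in $C_B(X)$, so every product $\overline{g_i}g_i$ lies in $G$, and hence so does their sum $g$. By parts (1), (2) and (4) of Lemma \ref{LKHGF} we get
\[g_\beta=\sum_{i=1}^n\overline{(g_i)_\beta}\,(g_i)_\beta=\sum_{i=1}^n\big|g_{i\beta}\big|^2,\]
so that $g_\beta$ is a non-negative real-valued mapping. Finally, given $t$ in $K$, the choice of the $g_i$ provides an index $i$ with $|g_{i\beta}(t)|>1$, and therefore
\[\big|g_\beta(t)\big|=g_\beta(t)=\sum_{j=1}^n\big|g_{j\beta}(t)\big|^2\geq\big|g_{i\beta}(t)\big|^2>1,\]
which shows that $t$ lies in $|g_\beta|^{-1}((1,\infty))$. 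Hence $K\subseteq|g_\beta|^{-1}((1,\infty))$ with $g$ in $G$, which is (\ref{JGD}).

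The only step needing any thought is this final combining move: an arbitrary finite union of the sets $|g_{i\beta}|^{-1}((1,\infty))$ need not itself be of the form $|g_\beta|^{-1}((1,\infty))$ for a single $g$, so one must produce such a $g$ explicitly. Forming $\sum_{i=1}^n\overline{g_i}g_i$ does the job precisely because an ideal is stable under multiplication by arbitrary elements of $C_B(X)$, and because the corresponding extension to $\beta X$ is bounded below by each $|g_{i\beta}|^2$, hence exceeds $1$ wherever any one of the $|g_{i\beta}|$ does. Everything else is a routine application of compactness together with Lemma \ref{LKHGF}.
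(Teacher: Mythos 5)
Your proof is correct and follows essentially the same route as the paper: cover $K$ using Lemma \ref{DFH}, extract a finite subcover by compactness, and take $g=\sum_{i=1}^n g_i\overline{g_i}$, whose extension $g_\beta=\sum_{i=1}^n|g_i^\beta|^2$ exceeds $1$ on $K$. The only difference is that you spell out the final estimate that the paper calls trivial.
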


\begin{proof}
By Lemma \ref{DFH} and using compactness of $K$, we have
\[K\subseteq\bigcup_{i=1}^n|g_i^\beta|^{-1}\big((1,\infty)\big)\]
where $g_1,\ldots,g_n$ are in $G$. Let
\[g=\sum_{i=1}^ng_i\overline{g_i}.\]
Then $g$ is in $G$. We have
\[g^\beta=\sum_{i=1}^ng^\beta_i\overline{g^\beta_i},\]
from which (\ref{JGD}) follows trivially.
\end{proof}

The following lemma is known. (See Lemma 3.2.2 of \cite{Kou6}.)

\begin{lemma}\label{JJHF}
Let $X$ be a completely regular space. Let $f$ be in $C_B(X)$. Let $f_1,f_2,\ldots$ be a sequence in $C_B(X)$ such that
\[|f|^{-1}\big([1/n,\infty)\big)\subseteq |f_n|^{-1}\big([1,\infty)\big)\]
for any positive integer $n$. Then $g_nf_n\rightarrow f$ for some sequence $g_1,g_2,\ldots$ in $C_B(X)$.
\end{lemma}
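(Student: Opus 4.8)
The plan is to construct the sequence $g_1,g_2,\ldots$ explicitly by a ``regularized division'' of $f$ by $f_n$: on the region where $|f_n|$ is large — which, by hypothesis, includes the set where $|f|\geq 1/n$ — we genuinely divide and recover $f$, while elsewhere we merely arrange that the product $g_nf_n$ stays small, which is harmless because $f$ itself is small there.

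Concretely, I would set
\[
g_n=f\cdot\frac{\overline{f_n}}{\max\{|f_n|^2,1\}}
\]
for each positive integer $n$. First I would check that $g_n\in C_B(X)$: the denominator $\max\{|f_n|^2,1\}$ is a continuous real-valued function bounded below by $1$, so the quotient is continuous, and $|g_n|\leq\|f\|$ since $|f_n|/\max\{|f_n|^2,1\}\leq 1$ everywhere. The key elementary observation is then that the real-valued function
\[
\varphi_n:=f_n\cdot\frac{\overline{f_n}}{\max\{|f_n|^2,1\}}=\frac{|f_n|^2}{\max\{|f_n|^2,1\}}
\]
takes values in $[0,1]$ and equals $1$ precisely on $|f_n|^{-1}\big([1,\infty)\big)$.

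With this in hand the estimate is immediate. On $|f_n|^{-1}\big([1,\infty)\big)$ we have $g_nf_n=f\varphi_n=f$, so the difference vanishes there. Off this set, i.e. where $|f_n|<1$, the contrapositive of the hypothesis gives $|f|<1/n$, and since $0\leq\varphi_n\leq 1$ we obtain
\[
|g_nf_n-f|=|f|\,|\varphi_n-1|\leq|f|<\frac1n.
\]
Combining the two cases yields $\|g_nf_n-f\|\leq 1/n$, hence $g_nf_n\rightarrow f$, as required. The construction and the argument are identical whether the scalar field is $\mathbb{R}$ or $\mathbb{C}$.

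There is no serious obstacle here; the only point that requires a little care — and the reason for writing the denominator as $\max\{|f_n|^2,1\}$ rather than defining $g_n$ piecewise according to whether $|f_n|\geq 1$ — is continuity across the ``transition set'' $|f_n|^{-1}(\{1\})$, where the hypothesis gives no information about $f$; the single closed-form expression above sidesteps this issue entirely.
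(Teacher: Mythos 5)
Your proof is correct: $\max\{|f_n|^2,1\}$ is continuous and bounded below by $1$, so $g_n=f\,\overline{f_n}/\max\{|f_n|^2,1\}$ lies in $C_B(X)$ with $\|g_n\|\leq\|f\|$, and the two-case estimate (using that $\varphi_n=1$ where $|f_n|\geq 1$, while $|f|<1/n$ where $|f_n|<1$, by the contrapositive of the hypothesis) gives $\|g_nf_n-f\|\leq 1/n$. The paper itself gives no proof of this lemma, stating it as known and citing Lemma 3.2.2 of \cite{Kou6}, so there is nothing internal to compare against; your explicit regularized-division construction is a clean, self-contained argument of exactly the standard kind used for such statements (it is the familiar device from the characterization of closed ideals of $C_0(Y)$), and it handles the scalar field $\mathbb{R}$ or $\mathbb{C}$ uniformly as you note.
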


\begin{lemma}\label{SHK}
Let $X$ be a completely regular space. Let $G$ be a closed ideal in $C_B(X)$. Let $f$ be in $C_B(X)$ such that $f_\beta|_{\beta X\setminus\lambda_GX}=0$. Then $f$ is in $G$.
\end{lemma}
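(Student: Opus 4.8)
The plan is to reduce the statement to the known approximation result Lemma \ref{JJHF}, combined with the compactness argument of Lemma \ref{GGF}, and then to invoke that $G$ is a \emph{closed} ideal. The idea is that $f_\beta$ vanishing off $\lambda_GX$ forces the ``bulk'' of $f$ to be carried, for each scale $1/n$, by a single element of $G$; feeding these elements into Lemma \ref{JJHF} exhibits $f$ as a limit of multiples of elements of $G$, hence as an element of $G$ itself.

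First I would fix a positive integer $n$ and set
\[K_n=|f_\beta|^{-1}\big([1/n,\infty)\big).\]
This set is closed in $\beta X$ and hence compact. Since $f_\beta$ vanishes on $\beta X\setminus\lambda_GX$, every point of $K_n$ lies in $\lambda_GX$, so $K_n$ is a compact subspace of $\lambda_GX$. By Lemma \ref{GGF} there is some $f_n\in G$ with
\[K_n\subseteq|f_n^\beta|^{-1}\big((1,\infty)\big).\]
Intersecting this inclusion with $X$, and recalling that $f_n^\beta$ and $f_\beta$ extend $f_n$ and $f$ respectively, one gets
\[|f|^{-1}\big([1/n,\infty)\big)\subseteq|f_n|^{-1}\big((1,\infty)\big)\subseteq|f_n|^{-1}\big([1,\infty)\big).\]
Thus the sequence $f_1,f_2,\ldots$ in $C_B(X)$ satisfies exactly the hypothesis of Lemma \ref{JJHF}, so there is a sequence $g_1,g_2,\ldots$ in $C_B(X)$ with $g_nf_n\to f$ in $C_B(X)$. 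Since $G$ is an ideal and each $f_n\in G$, every $g_nf_n$ lies in $G$; since $G$ is closed in $C_B(X)$, the limit $f$ lies in $G$, as desired.

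I do not expect a genuine obstacle here. The only point needing a little care is the observation that $|f_\beta|^{-1}([1/n,\infty))$ is a compact subspace of $\lambda_GX$ (and not merely of $\beta X$) — this is precisely where the hypothesis $f_\beta|_{\beta X\setminus\lambda_GX}=0$ enters — together with the harmless passage from the cozero-type containment on $\beta X$ to the corresponding containment on $X$, which is valid because all functions in sight are restrictions to $X$ of their Stone--\v{C}ech extensions.
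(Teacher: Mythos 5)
Your proof is correct and follows essentially the same route as the paper's: compactness of $|f_\beta|^{-1}([1/n,\infty))$ inside $\lambda_GX$, Lemma \ref{GGF} to dominate it by a single element of $G$, Lemma \ref{JJHF} to write $f$ as a limit of products with elements of $G$, and closedness of $G$ to conclude. The only difference is notational (your $f_n$, $g_n$ swap the roles the paper assigns them), which does not affect the argument.
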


\begin{proof}
Let $n$ be a positive integer. Then $|f_\beta|^{-1}([1/n,\infty))$ is contained in $\lambda_GX$ and, being closed in $\beta X$, is compact. By Lemma \ref{GGF}, we have
\[|f_\beta|^{-1}\big([1/n,\infty)\big)\subseteq|g_n^\beta|^{-1}\big((1,\infty)\big)\]
for some $g_n$ in $G$. We intersect the two sides of the above relation with $X$ to obtain
\[|f|^{-1}\big([1/n,\infty)\big)\subseteq|g_n|^{-1}\big((1,\infty)\big).\]
This holds for every positive integer $n$, therefore, by Lemma \ref{JJHF} we have $g_nf_n\rightarrow f$ for some sequence $f_1,f_2,\ldots$ in $C_B(X)$. But then $f$ is in $G$, as $f$ is the limit of a sequence in $G$ and $G$ is closed in  $C_B(X)$.
\end{proof}

The following lemma generalizes Theorem 3.2.11 of \cite{Kou6} (Theorem \ref{HJGS}).

Recall that an ideal $H$ in a ring $R$ is called \textit{indecomposable} if $H\neq I\oplus J$ for any non-zero ideals $I$ and $J$ of $R$.

\begin{lemma}\label{OPF}
Let $X$ be a completely regular space. For a closed ideal $G$ of $C_B(X)$ the following are equivalent:
\begin{itemize}
\item[(1)] $\lambda_GX$ is connected.
\item[(2)] $G$ is indecomposable.
\end{itemize}
\end{lemma}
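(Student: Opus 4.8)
The plan is to use Lemma \ref{HJGF} to translate the decomposition $G = I \oplus J$ (with $I,J$ closed ideals of $C_B(X)$ contained in $G$) into a partition of the open set $\lambda_G X$ into two disjoint nonempty open pieces, and conversely. First I would establish the forward direction by contraposition: assume $\lambda_G X = U_1 \cup U_2$ where $U_1, U_2$ are disjoint nonempty open subsets of $\lambda_G X$ (equivalently, open in $\beta X$, since $\lambda_G X$ is open in $\beta X$). By Lemma \ref{HJGF} applied inside $\mathfrak{sp}(H)$ with $H$ replaced by $G$ itself — note $G$ is a non-vanishing closed ideal of $C_B(X)$ with $\mathfrak{sp}(G) = \lambda_G X$, so the lemma applies verbatim — each $U_k$ has the form $\lambda_{G_k} X$ where $G_k = \{g \in G : g_\beta|_{\beta X \setminus U_k} = 0\}$ is a closed subideal of $G$. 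Since $U_1 \cap U_2 = \emptyset$, any $g \in G_1 \cap G_2$ has $g_\beta$ vanishing on $(\beta X \setminus U_1) \cup (\beta X \setminus U_2) = \beta X \setminus (U_1 \cap U_2) = \beta X$, so $g = 0$; thus $G_1 \cap G_2 = 0$. Moreover $G_1 + G_2 = G$: given $g \in G$, pick a continuous $F : \beta X \to [0,1]$ with $F|_{\mathrm{cl}_{\beta X} U_1} = 1$ and $F|_{\mathrm{cl}_{\beta X} U_2} = 0$ — this is possible because the disjoint open-in-$\beta X$ sets $U_1, U_2$, together with the fact that $g_\beta$ already vanishes off $U_1 \cup U_2$, let us take $F$ on $\mathrm{cl}_{\beta X}(U_1 \cup U_2)$ and note $g_\beta F$, $g_\beta(1-F)$ vanish off $U_1$, off $U_2$ respectively; then $gF|_X \in G_1$, $g(1-F)|_X \in G_2$, and they sum to $g$. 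Finally $G_1, G_2$ are both nonzero since $U_1, U_2$ are nonempty (each contains a point $u$ where some $h_\beta(u) \neq 0$, and the cutoff argument in Lemma \ref{HJGF} produces a nonzero element of the corresponding $G_k$). Hence $G = G_1 \oplus G_2$ is decomposable.

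For the converse, suppose $G = I \oplus J$ with $I, J$ nonzero ideals of $C_B(X)$; since they are complemented summands of the closed ideal $G$, they are themselves closed. I claim $\lambda_G X = \lambda_I X \cup \lambda_J X$ with $\lambda_I X \cap \lambda_J X = \emptyset$, both nonempty, giving a disconnection. The union equality is immediate from $I, J \subseteq G$ (giving $\supseteq$) and $G = I + J$ together with $\mathrm{Coz}((i+j)_\beta) \subseteq \mathrm{Coz}(i_\beta) \cup \mathrm{Coz}(j_\beta)$ (giving $\subseteq$). Both are nonempty because $I, J$ are nonzero: a nonzero $f \in I$ has $f_\beta \neq 0$, so $\lambda_I X \neq \emptyset$. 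The crucial point is disjointness. If $t \in \lambda_I X \cap \lambda_J X$, then $i_\beta(t) \neq 0$ and $j_\beta(t) \neq 0$ for some $i \in I$, $j \in J$; then $i\overline{i}\,j\overline{j} \in I \cap J = 0$, yet $(i\overline{i}\,j\overline{j})_\beta(t) = |i_\beta(t)|^2 |j_\beta(t)|^2 \neq 0$, a contradiction. So $\lambda_G X$ is disconnected.

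I expect the main obstacle to be the clean handling of the cutoff function $F$ in the forward direction — specifically, verifying that $gF|_X$ genuinely lies in $G_1$ (i.e.\ that $(gF|_X)_\beta = g_\beta F$ vanishes on $\beta X \setminus U_1$) when $F$ is only controlled on the closures of $U_1$ and $U_2$ and we only know $g_\beta$ vanishes on $\beta X \setminus (U_1 \cup U_2)$. The resolution is that on $\beta X \setminus U_1$ we have either a point of $\beta X \setminus (U_1 \cup U_2)$, where $g_\beta = 0$, or a point of $U_2 \subseteq \mathrm{cl}_{\beta X} U_2$, where $F = 0$ (choosing $F$ via complete regularity of $\beta X$ to be $1$ on $\mathrm{cl}_{\beta X} U_1$ and $0$ on $\mathrm{cl}_{\beta X} U_2$, noting these two closed sets are disjoint since $U_1, U_2$ are disjoint open subsets of $\beta X$ and one can first separate them by disjoint open sets). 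A secondary subtlety is confirming that $\lambda_G X$ being open in $\beta X$ means ``open subset of $\lambda_G X$'' and ``open subset of $\beta X$ contained in $\lambda_G X$'' coincide, so that Lemma \ref{HJGF} — stated for open subspaces of a spectrum — applies to the pieces $U_1, U_2$ of the disconnection; this is routine but worth a sentence.
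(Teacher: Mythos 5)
Your second half (passing from $G=I\oplus J$ to a separation of $\lambda_GX$ via the product trick) is correct and is essentially the paper's argument; the parenthetical claim that the summands $I,J$ are automatically closed is neither obviously true nor needed there. The genuine gap is in the other direction, in your proof that $G=G_1+G_2$. You want a single continuous $F:\beta X\to[0,1]$ with $F\equiv 1$ on $\mathrm{cl}_{\beta X}U_1$ and $F\equiv 0$ on $\mathrm{cl}_{\beta X}U_2$, on the grounds that these closures are disjoint because $U_1,U_2$ are disjoint open subsets of $\beta X$ which ``can first be separated by disjoint open sets''. That is false: disjoint open sets in a compact Hausdorff space need not have disjoint closures (normality separates disjoint \emph{closed} sets), and here the closures can meet precisely at points of $\beta X\setminus\lambda_GX$, and in general they do. Take $X=[0,2]$ (so $\beta X=X$) and $G=\{f\in C[0,2]:f(1)=0\}$; then $\lambda_GX=[0,1)\cup(1,2]$ with separation $U_1=[0,1)$, $U_2=(1,2]$, and $\mathrm{cl}_{\beta X}U_1\cap\mathrm{cl}_{\beta X}U_2=\{1\}$. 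No continuous $F$ can be $1$ on all of $U_1$ and $0$ on all of $U_2$ (continuity forces its value at $1$ to be both), so even the weakened version in your last paragraph, which still needs $F\equiv 1$ on $U_1$ and $F\equiv 0$ on $U_2$, cannot be carried out: the splitting of $g$ by one global cutoff simply does not exist. The same example shows that a closed ideal $G$ need not be non-vanishing, so Lemma \ref{HJGF} does not apply ``verbatim'' with $H$ replaced by $G$ and $\mathfrak{sp}(G)$ is not even defined; what does survive (and what the paper actually uses) is that $\lambda_GX$ is open in $\beta X$ for an arbitrary ideal $G$, together with Lemma \ref{SHK}, which is stated for arbitrary closed ideals.

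The paper supplies the missing idea by cutting off only where $|g_\beta|$ is bounded away from zero: for each $n$ the sets $C^i_n=U_i\cap|g_\beta|^{-1}([1/n,\infty))$ are closed in $\beta X$ (hence compact) and contained in the open set $U_i$, so Urysohn gives $\phi^i_n$ equal to $1$ on $C^i_n$ and $0$ off $U_i$; then $g\phi^1_n|_X+g\phi^2_n|_X\to g$ with error at most $1/n$, and a Cauchy-sequence argument, using that each $G_i$ is closed in $C_B(X)$, yields $g\phi^i_n|_X\to f_i\in G_i$ and $g=f_1+f_2$. (An alternative repair: define $h$ on $\beta X$ by $h=g_\beta$ on $U_1$ and $h=0$ elsewhere; $h$ is continuous because every point of $\mathrm{cl}_{\beta X}U_1\setminus U_1$ lies outside $\lambda_GX$, where $g_\beta$ vanishes; then $h|_X\in G_1$ by Lemma \ref{SHK} and $g-h|_X\in G_2$. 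Either way, some additional idea beyond a global Urysohn cutoff is required, and the closedness of $G$ and of the $G_i$ — which your argument never invokes — enters essentially at this point.)
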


\begin{proof}
(1) \textit{implies} (2). Suppose that $G$ is decomposable. Let $G_1$ and $G_2$ be non-zero ideals of $G$ such that $G=G_1\oplus G_2$. Let $U_i=\lambda_{G_i}X$ where $i=1,2$. We prove that $U_1$ and $U_2$ is a separation for $\lambda_GX$. It is clear that $U_1$ and $U_2$ are open subspaces of $\lambda_GX$, and are non-empty, as $G_1$ and $G_2$ are non-zero. To check that $\lambda_GX=U_1\cup U_2$ it clearly suffices to check that $\lambda_GX\subseteq U_1\cup U_2$. Let $t$ be in $\lambda_GX$. Then $g^\beta(t)\neq 0$ for some $g$ in $G$. Let $g=g_1+g_2$ where $g_i$ is in $G_i$ for any $i=1,2$. Then $g^\beta=g_1^\beta+g_2^\beta$. Since $g^\beta(t)\neq 0$, we have $g_j^\beta(t)\neq 0$ for some $j=1,2$. Therefore $t$ is in $U_j$. Finally, we check that $U_1$ and $U_2$ are disjoint. Suppose otherwise that there is some $t$ in $U_1\cap U_2$. Then $g_i^\beta(t)\neq 0$ for some $g_i$ in $G_i$ and $i=1,2$. Then $g_1^\beta(t)g_2^\beta(t)\neq 0$, which is a contradiction, as $g_1g_2$ is in $G_1\cap G_2$ (and the latter is $0$).

(2) \textit{implies} (1). Suppose that $\lambda_GX$ is disconnected. Let $U_1$ and $U_2$ be a separation for $\lambda_GX$. Let
\[G_i=\{g\in G: g^\beta|_{\beta X\setminus U_i}=0\}\]
where $i=1,2$. We prove that $G_1$ and $G_2$ are non-zero ideals of $C_B(X)$ such that $G=G_1\oplus G_2$.

That $G_1$ and $G_2$ are subideals of $G$ follows easily. We check that $G_1$ and $G_2$ are non-zero. Let $i=1,2$. Let $t$ be in $U_i$. Note that $U_i$ is open in $\beta X$, as is open in $\lambda_GX$ (and the latter is so). Let $F_i:\beta X\rightarrow[0,1]$ be a continuous mapping such that $F_i(t)=1$ and $F_i|_{\beta X\setminus U_i}=0$. Then $F_i|_X$ is non-zero, and is in $G$ (and therefore in $G_i$), by Lemma \ref{SHK}. (Note that $F_i=(F_i|_X)_\beta$, as both mappings $F_i$ and $(F_i|_X)_\beta$ coincide on the dense subspace $X$ of $\beta X$.)

Note that $G_1\cap G_2=0$, as for any $g$ in $G_1\cap G_2$, $g^\beta$ should necessarily vanish on both $U_1$ and $U_2$, and thus on $\lambda_GX$ (and therefore on the whole $\beta X$, as $g$, being in $G$, vanishes outside $\lambda_GX$).

We show that $G=G_1+G_2$. Let $g$ be in $G$. Let $n$ be a positive integer. Note that $|g_\beta|^{-1}([1/n,\infty))\subseteq\lambda_GX$. Let $i=1,2$. Now, since $U_i$ is closed in $\lambda_GX$, the intersection $U_i\cap|g_\beta|^{-1}([1/n,\infty))$, denoted by $C^i_n$, is closed in $|g_\beta|^{-1}([1/n,\infty))$, and is therefore closed in $\beta X$. Note that $U_i$ is open in $\beta X$. Thus, by the Urysohn lemma there is a continuous mapping $\phi^i_n:\beta X\rightarrow [0,1]$ such that
\[\phi^i_n|_{C^i_n}=1\quad\text{and}\quad\phi^i_n|_{\beta X\setminus U_i}=0.\]
Note that $\phi^i_n|_X$ is in $G$, and therefore in $G_i$, by Lemma \ref{SHK}. Let
\begin{equation}\label{RSS}
g_n=g\phi^1_n|_X+g\phi^2_n|_X.
\end{equation}
We check that
\begin{equation}\label{IGS}
g_n\longrightarrow g,
\end{equation}
or, equivalently, $g^\beta_n\rightarrow g^\beta$, as $\|g^\beta_n-g^\beta\|=\|g_n-g\|$ for all positive integers $n$. Note that $g^\beta_n=g^\beta\phi^1_n+g^\beta\phi^2_n$ for any positive integer $n$, and
\begin{equation}\label{PHJS}
\|g^\beta_n-g^\beta\|\leq 1/n;
\end{equation}
as we now check the latter. Let $n$ be a positive integer. Let $u$ be in $\beta X$. It is clear that $g^\beta_n(u)-g^\beta(u)=0$ if $u$ is not in $\lambda_GX$, as $g^\beta$ (and thus $g^\beta_n$) vanishes outside $\lambda_GX$ (since $g$ is in $G$). So, let $u$ be in $\lambda_GX$. Then $u$ is in either $U_1$ or $U_2$, say $U_1$. Note that $\phi^2_n(u)=0$ by definition (as $u$ is in $\beta X\setminus U_2$), and thus $g^\beta_n(u)=g^\beta(u)\phi^1_n(u)$. We check that in either of the following cases we have
\begin{equation}\label{OJHDF}
\big|g^\beta_n(u)-g^\beta(u)\big|\leq 1/n.
\end{equation}
\begin{itemize}
\item[(i).] Suppose that $u$ is in $C^1_n$. Then $\phi^1_n(u)=1$ by definition. Therefore $g^\beta_n(u)=g^\beta(u)$, and thus (\ref{OJHDF}) holds trivially.
\item[(ii).] Suppose that $u$ is not in $C^1_n$. Then, by definition of $C^1_n$ (and since $u$ is in $U_1$) we have $|g^\beta(u)|\leq 1/n$. Therefore \[\big|g^\beta_n(u)-g^\beta(u)\big|=\big|g^\beta(u)\phi^1_n(u)-g^\beta(u)\big|=\big|g^\beta(u)\big|\big|\phi^1_n(u)-1\big|\leq\big|g^\beta(u)\big|\leq 1/n.\]
\end{itemize}
Thus (\ref{OJHDF}) holds in either cases. This shows (\ref{PHJS}). Now $g^\beta_1,g^\beta_2,\ldots$, being convergent by (\ref{PHJS}), is a Cauchy sequence. Note that
\begin{eqnarray*}
\|g^\beta_m-g^\beta_n\|&=&\sup_{t\in\lambda_GX}\big|g^\beta_m(t)-g^\beta_n(t)\big|\\&=&
\max\big(\sup_{t\in U_1}\big|g^\beta_m(t)-g^\beta_n(t)\big|,\sup_{t\in U_2}\big|g^\beta_m(t)-g^\beta_n(t)\big|\big)\\&=&
\max\big(\|g^\beta_m\phi^1_m-g^\beta_n\phi^1_n\|,\|g^\beta_m\phi^2_m-g^\beta_n\phi^2_n\|\big)
\end{eqnarray*}
for any positive integers $m$ and $n$. Thus $g^\beta_1\phi^i_1,g^\beta_2\phi^i_2,\ldots$ (where $i=1,2$) is a Cauchy sequence, and therefore so is the sequence $g_1\phi^i_1|_X,g_2\phi^i_2|_X,\ldots$ in $G_i$ obtained by restricting its elements to $X$. But $G_i$ is closed in $C_B(X)$, as it easily follows from its definition. Therefore $g_n\phi^i_n|_X\rightarrow f_i$ for some $f_i$ in $G_i$ and $i=1,2$. In particular, then   \[g\phi^1_n|_X+g\phi^2_n|_X\longrightarrow f_1+f_2,\]
which by (\ref{RSS}) and (\ref{IGS}) implies that $g=f_1+f_2$. Therefore $g$ is in $G_1+G_2$. This shows that $G=G_1+G_2$, and thus together with above implies that $G=G_1\oplus G_2$.
\end{proof}

\begin{lemma}\label{JHGF}
Let $X$ be a completely regular space. Let $G_1,\ldots,G_n$ be ideals of $C_B(X)$. Then
\[\lambda_{G_1+\cdots+G_n}X=\lambda_{G_1}X\cup\cdots\cup\lambda_{G_n}X.\]
\end{lemma}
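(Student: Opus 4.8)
The plan is to prove the identity by double inclusion, working with the definition $\lambda_GX=\bigcup_{g\in G}\mathrm{Coz}(g_\beta)$ throughout. The inclusion $\lambda_{G_1}X\cup\cdots\cup\lambda_{G_n}X\subseteq\lambda_{G_1+\cdots+G_n}X$ is the easy direction: each $G_i$ is a subideal of $G_1+\cdots+G_n$ (write $g_i=g_i+0+\cdots+0$), so $\mathrm{Coz}(g_i^\beta)\subseteq\lambda_{G_1+\cdots+G_n}X$ for every $g_i\in G_i$, and taking unions gives the claim. By symmetry this handles every summand, so the union is contained in $\lambda_{G_1+\cdots+G_n}X$.

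For the reverse inclusion, I would take a point $t\in\lambda_{G_1+\cdots+G_n}X$, so that $g^\beta(t)\neq 0$ for some $g\in G_1+\cdots+G_n$. Writing $g=g_1+\cdots+g_n$ with $g_i\in G_i$ and using additivity of the Stone--\v{C}ech extension (Lemma \ref{LKHGF}(1)), we get $g^\beta(t)=g_1^\beta(t)+\cdots+g_n^\beta(t)\neq 0$, hence $g_j^\beta(t)\neq 0$ for at least one index $j$. That means $t\in\mathrm{Coz}(g_j^\beta)\subseteq\lambda_{G_j}X\subseteq\lambda_{G_1}X\cup\cdots\cup\lambda_{G_n}X$. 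Combining the two inclusions completes the proof.

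There is really no hard part here — this is exactly the finite-sum analogue of the argument already used inside the proof of Lemma \ref{OPF} (the ``(1) implies (2)'' direction), where the case $n=2$ is carried out implicitly. The only things to be slightly careful about are that $G_1+\cdots+G_n$ is genuinely an ideal of $C_B(X)$ (so that $\lambda_{G_1+\cdots+G_n}X$ is defined in the sense of Definition \ref{GHG}), which is immediate since a finite sum of ideals is an ideal, and that the decomposition $g=g_1+\cdots+g_n$ exists by definition of the sum of ideals. I would write the argument out in two short paragraphs mirroring the structure above, with no appeal to anything beyond Definition \ref{GHG} and Lemma \ref{LKHGF}(1); a one-line induction on $n$ starting from the $n=2$ case is an equally acceptable presentation, but the direct argument is cleaner.
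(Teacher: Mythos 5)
Your proposal is correct and follows essentially the same route as the paper: the easy inclusion is dismissed as immediate, and the reverse inclusion is proved by decomposing $g=g_1+\cdots+g_n$, applying $g^\beta=g_1^\beta+\cdots+g_n^\beta$, and noting that some summand must be nonzero at $t$. The only difference is that you spell out the easy direction, which the paper simply labels as clear.
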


\begin{proof}
Clearly, we only need to check that
\[\lambda_{G_1+\cdots+G_n}X\subseteq\lambda_{G_1}X\cup\cdots\cup\lambda_{G_n}X.\]
Let $t$ be in $\lambda_{G_1+\cdots+G_n}X$. Then $g^\beta(t)\neq 0$ for some $g$ in $G_1+\cdots+G_n$. Let $g=g_1+\cdots+g_n$ where $g_i$ is in $G_i$ for any $i=1,\ldots,n$. Then $g^\beta=g_1^\beta+\cdots+g_n^\beta$. Since $g^\beta(t)\neq 0$ we have $g^\beta_i(t)\neq 0$ for some $i=1,\ldots,n$. Therefore $t$ is in $\lambda_{G_i}X$.
\end{proof}

The following theorem is analogues to Theorem 3.2.12 of \cite{Kou6} (Theorem \ref{JFDF}) and gives a necessary and sufficient condition for the spectrum of a non-vanishing closed ideal of $C_B(X)$ to be locally connected.

Recall that a space $Y$ is locally connected if and only if every component of every open subspace in $Y$ is open in $Y$.

\begin{theorem}\label{KHD}
Let $X$ be a completely regular space. Let $H$ be a non-vanishing closed ideal in $C_B(X)$. The following are equivalent:
\begin{itemize}
\item[\rm(1)] $\mathfrak{sp}(H)$ is locally connected.
\item[\rm(2)] For every closed subideal $G$ of $H$ we have
\begin{equation}\label{OJH}
G=\overline{\bigoplus_{i\in I}G_i},
\end{equation}
where $G_i$ is an indecomposable closed ideal in $C_B(X)$ for any $i$ in $I$.
\end{itemize}
Here the bar denotes the closure in $C_B(X)$.
\end{theorem}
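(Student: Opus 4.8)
The plan is to combine Lemma~\ref{HJGF} (open subspaces of $\mathfrak{sp}(H)$ are exactly the $\lambda_GX$ for closed subideals $G$ of $H$), Lemma~\ref{OPF} (connectedness of $\lambda_GX$ $\Leftrightarrow$ indecomposability of $G$), and the characterization of local connectedness in terms of components of open subspaces being open, essentially mimicking the proof of Theorem~\ref{JFDF} but applied to each open subspace rather than to the whole space. First I would prove (1) $\Rightarrow$ (2). Assume $\mathfrak{sp}(H)$ is locally connected and let $G$ be a closed subideal of $H$. By Lemma~\ref{HJGF}, $U:=\lambda_GX$ is open in $\mathfrak{sp}(H)$, so by local connectedness each component $C_i$ ($i\in I$) of $U$ is open in $\mathfrak{sp}(H)$, hence open in $U$, and $U=\bigcup_{i\in I}C_i$ is a disjoint union. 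For each $i$, applying Lemma~\ref{HJGF} once more (with $H$ replaced by... actually directly) there is a closed subideal $G_i$ of $H$ with $\lambda_{G_i}X=C_i$; since $C_i$ is connected, Lemma~\ref{OPF} shows $G_i$ is indecomposable. One then checks $G=\overline{\bigoplus_{i\in I}G_i}$: the $G_i$ are pairwise "orthogonal" (for $i\neq j$, $g_i g_j=0$ whenever $g_i\in G_i$, $g_j\in G_j$, because their $\beta$-extensions are supported on disjoint sets $C_i$, $C_j$), so $\bigoplus_{i\in I}G_i$ makes sense inside $G$; and its closure equals $G$ because any $g\in G$ has $|g_\beta|^{-1}([1/n,\infty))$ compact in $U$, hence meeting only finitely many $C_i$, so $g$ is approximated by finite sums of elements of the $G_i$ — this is exactly the approximation argument already carried out in the proof of Lemma~\ref{OPF}, now iterated over finitely many components at a time.

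Next I would prove (2) $\Rightarrow$ (1). Assume (2); to show $\mathfrak{sp}(H)$ is locally connected it suffices to show every component of every open subspace $U$ of $\mathfrak{sp}(H)$ is open in $\mathfrak{sp}(H)$. Given such a $U$, Lemma~\ref{HJGF} gives a closed subideal $G$ of $H$ with $U=\lambda_GX$. By hypothesis, $G=\overline{\bigoplus_{i\in I}G_i}$ with each $G_i$ an indecomposable closed ideal of $C_B(X)$. Here I would first note that each $G_i$ is automatically a subideal of $G$ (being a summand), hence of $H$, so $\lambda_{G_i}X$ is an open subspace of $U$, and by Lemma~\ref{OPF} it is connected. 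Using Lemma~\ref{JHGF} (extended to the closure of an infinite direct sum — which requires a small argument: $\lambda$ of a closed ideal is determined by cozerosets of $\beta$-extensions, and passing to the closure does not enlarge the union of cozerosets beyond its closure intersected with $\mathfrak{sp}(H)$, but in fact one shows $\lambda_{\overline{\bigoplus G_i}}X=\bigcup_{i\in I}\lambda_{G_i}X$), one gets $U=\bigcup_{i\in I}\lambda_{G_i}X$ as a union of connected open subsets which are pairwise disjoint (disjointness again from $G_i\cap G_j=0$ forcing the supports to be disjoint). A disjoint union of connected open sets has those sets as its components; hence each component of $U$ is one of the $\lambda_{G_i}X$, which is open in $\mathfrak{sp}(H)$. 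Therefore $\mathfrak{sp}(H)$ is locally connected.

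The main obstacle I anticipate is the bookkeeping around \emph{infinite} direct sums and their closures: verifying that $\lambda_{\overline{\bigoplus_{i\in I}G_i}}X$ equals $\bigcup_{i\in I}\lambda_{G_i}X$ (not merely contains it, and not its closure), and dually, in (1) $\Rightarrow$ (2), that the closure of $\bigoplus_{i\in I}G_i$ is all of $G$ rather than a proper subideal. Both reduce to the same compactness trick already used in Lemmas~\ref{GGF}, \ref{SHK}, and \ref{OPF}: for $g$ in the relevant closed ideal, each set $|g_\beta|^{-1}([1/n,\infty))$ is compact and contained in the union of the (disjoint, open) pieces $\lambda_{G_i}X$, so it meets only finitely many of them; splitting $g$ accordingly via Urysohn functions and letting $n\to\infty$ gives convergence. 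I would isolate this as a short preliminary lemma — say, that if $G$ is a closed ideal and $\{U_i\}_{i\in I}$ is a partition of $\lambda_GX$ into relatively clopen sets, then $G=\overline{\bigoplus_{i\in I}G_i}$ where $G_i=\{g\in G:g_\beta|_{\beta X\setminus U_i}=0\}$, and $\lambda_{G_i}X=U_i$ — after which both implications of Theorem~\ref{KHD} follow by citing Lemma~\ref{HJGF}, Lemma~\ref{OPF}, and the topological fact that local connectedness is equivalent to components of open sets being open. The rest (that the summands are orthogonal, that each is a subideal of $C_B(X)$, that $\lambda_{G_i}X=U_i$) is routine and parallels computations already in the excerpt.
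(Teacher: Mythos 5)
Your proposal is correct and follows essentially the same route as the paper: Lemma \ref{HJGF} to pass between open subspaces of $\mathfrak{sp}(H)$ and closed subideals, Lemma \ref{OPF} for connected $\leftrightarrow$ indecomposable, and the compactness/approximation machinery (Lemmas \ref{DFH}, \ref{JHGF}, \ref{SHK}) to show $G=\overline{\bigoplus_{i\in I}G_i}$. The only cosmetic difference is in (2) $\Rightarrow$ (1), where the paper simply evaluates an approximating sequence at the given point $t$ to extract a single connected open set $\lambda_{G_{i_j}}X$ containing $t$ and contained in $U$, so your extra bookkeeping identifying the components of $U$ as the pairwise disjoint sets $\lambda_{G_i}X$ (via $G_iG_j\subseteq G_i\cap G_j=0$) is correct but not needed.
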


\begin{proof}
(1) \textit{implies} (2). Let $G$ be a closed subideal of $H$. By local connectedness of $\mathfrak{sp}(H)$ and since $\lambda_GX$ is open in $\mathfrak{sp}(H)$, the components of $\lambda_GX$ are open in $\mathfrak{sp}(H)$. Let $\mathscr{U}=\{U_i:i\in I\}$ denote the collection of all components of $\lambda_GX$. By Lemma \ref{HJGF}, for each $i$ in $I$ we have
\[U_i=\lambda_{G_i}X,\]
where $G_i$ is the closed ideal of $C_B(X)$ defined by
\[G_i=\{g\in H:g_\beta|_{\beta X\setminus U_i}=0\}.\]
Note that $G_i$ is indecomposable for any $i$ in $I$ by Lemma \ref{OPF}, as $\lambda_{G_i}X$ is connected.

We show (\ref{OJH}). First, we check that
\begin{equation}\label{JHPG}
\sum_{i\in I}G_i=\bigoplus_{i\in I}G_i.
\end{equation}
Let $g$ be in $G_{i_0}\cap\sum_{i_0\neq i\in I}G_i$ for some $i_0$ in $I$. Then $g=g_{i_0}=g_{i_1}+\cdots+g_{i_n}$, where $i_j$ is in $I\backslash\{i_0\}$ and $g_{i_j}$ is in $G_{i_j}$ for $j=0,1,\ldots,n$. Note that $g^\beta_{i_j}|_{\beta X\setminus U_{i_j}}=0$ for any $j=1,\ldots,n$. In particular
\[(g^\beta_{i_1}+\cdots+g^\beta_{i_n})|_{\beta X\setminus\bigcup_{j=1}^nU_{i_j}}=0.\]
But $g^\beta_{i_0}=g^\beta_{i_1}+\cdots+g^\beta_{i_n}$ and $U_{i_0}\subseteq\beta X\setminus\bigcup_{j=1}^nU_{i_j}$, therefore $g^\beta_{i_0}|_{U_{i_0}}=0$. On the other hand $g^\beta_{i_0}|_{\beta X\setminus U_{i_0}}=0$ which implies that $g^\beta_{i_0}=0$. Thus $g_{i_0}=0$, and therefore $g=0$. This shows (\ref{JHPG}).

It is clear that
\begin{equation}\label{JHG}
\overline{\sum_{i\in I}G_i}\subseteq G,
\end{equation}
as $G_i\subseteq G$ for any $i$ in $I$ (since for any $g$ in $G_i$, $g_\beta$ vanishes outside $U_i$ and thus outside $\lambda_GX$, and therefore $g$ is in $G$ by Lemma \ref{SHK}) and $G$ is closed in $C_B(X)$. We check the reverse inclusion in (\ref{JHG}). Let $g$ be in $G$. Let $n$ be a positive integer. Then $|g_\beta|^{-1}([1/n,\infty))$ is a compact subspace of $\lambda_GX$ and therefore there is a finite number of elements from $\mathscr{U}$, say $U_{i_1},\ldots,U_{i_{k_n}}$, such that
\[|g_\beta|^{-1}\big([1/n,\infty)\big)\subseteq U_{i_1}\cup\cdots\cup U_{i_{k_n}}=\lambda_{G_{i_1}}X\cup\cdots\cup\lambda_{G_{i_k}}X.\]
Now, using Lemma \ref{JHGF}, we have
\[|g_\beta|^{-1}\big([1/n,\infty)\big)\subseteq\lambda_{G_{i_1}+\cdots+G_{i_{k_n}}}X\subseteq\lambda_{\sum_{i\in I}G_i}X.\]
Since $n$ is arbitrary, this implies that
\[\mathrm{Coz}(g_\beta)\subseteq\lambda_{\sum_{i\in I}G_i}X.\]
Therefore $g_\beta$ vanishes outside $\lambda_{\sum_{i\in I}G_i}X$ and thus outside $\lambda_{\overline{\sum_{i\in I}G_i}}X$. Therefore $g$ is in $\overline{\sum_{i\in I}G_i}$ by Lemma \ref{SHK}. This shows the reverse inclusion in (\ref{JHG}) which together with (\ref{JHPG}) completes the proof.

(2) \textit{implies} (1). Let $t$ be in $\mathfrak{sp}(H)$ and let $U$ be an open neighborhood of $t$ in $\mathfrak{sp}(H)$. By Lemma \ref{HJGF} there is a closed subideal $G$ of $H$ such that $U=\lambda_GX$. Let $g$ be in $G$ such that $g^\beta(t)\neq 0$. Assume a representation for $G$ as given in (\ref{OJH}). There is a sequence $g_1,g_2,\ldots$ in $\bigoplus_{i\in I}G_i$ such that $g_n\rightarrow g$. Then $g^\beta_n\rightarrow g^\beta$, as $\|g^\beta_n-g^\beta\|=\|g_n-g\|$ for all positive integer $n$, and thus $g^\beta_n(t)\rightarrow g^\beta(t)$. But $g^\beta(t)\neq 0$, and therefore $g_n^\beta(t)\neq 0$ for some positive integer $n$. Let $g_n=g_{i_1}+\cdots+g_{i_k}$, where $i_j$ is in $I$ and $g_{i_j}$ is in $G_{i_j}$ for all $j=1,\ldots,k$. Note that $g^\beta_n=g^\beta_{i_1}+\cdots+g^\beta_{i_k}$, and therefore $g_{i_j}^\beta(t)\neq 0$ for some $j=1,\ldots,k$. Thus $t$ is in $\lambda_{G_{i_j}}X$. The latter is an open subspace of $\lambda_GX$, and is connected by Lemma \ref{OPF}, as $G_{i_j}$ is indecomposable.
\end{proof}

\section{Various connectedness properties of the spectrum}

In this section we study various (dis)connectedness properties of the spectrum of a non-vanishing closed ideal of $C_B(X)$. (As usual, $X$ is a completely regular space and $C_B(X)$ is endowed with pointwise addition and multiplication and the supremum norm.) The properties under consideration are total disconnectedness, zero-dimensionality, strong zero-dimensionality, total separatedness and extremal disconnectedness.

Recall that a completely regular space $Y$ is called
\begin{itemize}
  \item \textit{totally disconnected} if $Y$ has no connected subspace of cardinality larger than one, or, equivalently, if every component of $Y$ is a singleton.
  \item \textit{‎zero-dimensional} if the collection of all open-and-closed subspaces of $Y$ is an open base for $Y$.
  \item \textit{strongly ‎zero-dimensional} if every two completely separated sets of $Y$ are separated by two disjoint open-and-closed subspaces. (Two subsets $A$ and $B$ of a space $Y$ are \textit{completely separated} if there is a continuous mapping $f:Y\rightarrow [0,1]$ such that $f|_A=0$ and $f|_B=1$.)
  \item \textit{totally separated} if any two distinct points of $Y$ are separated by disjoint open-and-closed subspace of $Y$, or, equivalently, if every quasi-component of $Y$ is a singleton.
  \item \textit{extremally disconnected} if the closure of every open subspace of $Y$ is open.
\end{itemize}

It is known that strong zero-dimensionality implies zero-dimensionality, either of zero-dimensionality or total separatedness implies total disconnectedness, and zero-dimensionality and total disconnectedness coincide in the class of locally compact spaces. (See Theorems 6.2.1, 6.2.6, and 6.2.9 of \cite{E}.)

\begin{lemma}\label{HJGHF}
Let $X$ be a completely regular space. Let $G_1$ and $G_2$ be closed ideals of $C_B(X)$ such that $\lambda_{G_1}X=\lambda_{G_2}X$. Then $G_1=G_2$.
\end{lemma}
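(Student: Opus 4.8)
The plan is to show that the operation $G\mapsto\lambda_GX$ is injective on closed ideals of $C_B(X)$ by establishing, for an arbitrary closed ideal $G$, the intrinsic description
\[
G=\{f\in C_B(X):f_\beta|_{\beta X\setminus\lambda_GX}=0\}.
\]
Once this is proved, the lemma follows immediately: if $\lambda_{G_1}X=\lambda_{G_2}X$, then the right-hand side is the same set for $G_1$ and $G_2$, hence $G_1=G_2$.

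The inclusion $\subseteq$ is essentially the definition of $\lambda_GX$: for $f$ in $G$, the cozeroset $\mathrm{Coz}(f_\beta)$ is contained in $\lambda_GX$, so $f_\beta$ vanishes on $\beta X\setminus\lambda_GX$. The reverse inclusion $\supseteq$ is exactly the content of Lemma \ref{SHK}: if $f$ is in $C_B(X)$ with $f_\beta|_{\beta X\setminus\lambda_GX}=0$, then $f$ is in $G$. So in fact both inclusions are already available, and the proof is a two-line application of Lemma \ref{SHK} together with the trivial inclusion.

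Concretely, I would write: by definition $\lambda_{G_1}X=\bigcup_{g\in G_1}\mathrm{Coz}(g_\beta)$, so for every $g$ in $G_1$ we have $\mathrm{Coz}(g_\beta)\subseteq\lambda_{G_1}X=\lambda_{G_2}X$, i.e. $g_\beta$ vanishes on $\beta X\setminus\lambda_{G_2}X$; since $G_2$ is a closed ideal, Lemma \ref{SHK} (applied with $G_2$ in place of $G$ and $g$ in place of $f$) yields $g\in G_2$. Thus $G_1\subseteq G_2$, and by symmetry $G_2\subseteq G_1$, so $G_1=G_2$.

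The only thing to be careful about is that Lemma \ref{SHK} requires the ambient ideal to be closed, which is hypothesized here, and that it is stated for a single function $f$ with $f_\beta$ vanishing off $\lambda_GX$ — which is precisely the situation produced above. There is no real obstacle; the lemma is a clean consequence of the machinery already set up, and the "hard part," namely the approximation argument via Lemmas \ref{GGF} and \ref{JJHF}, has already been done inside the proof of Lemma \ref{SHK}.
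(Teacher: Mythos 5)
Your proposal is correct and is essentially the paper's own proof: both deduce from $\lambda_{G_1}X=\lambda_{G_2}X$ that each $g\in G_1$ has $g_\beta$ vanishing on $\beta X\setminus\lambda_{G_2}X$, apply Lemma \ref{SHK} to get $g\in G_2$, and conclude by symmetry. Your framing via the intrinsic description $G=\{f\in C_B(X):f_\beta|_{\beta X\setminus\lambda_GX}=0\}$ is just a repackaging of the same two inclusions.
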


\begin{proof}
Let $g$ be in $G_1$. Then $g_\beta$ vanishes outside $\lambda_{G_1}X$, and thus outside $\lambda_{G_2}X$. Therefore $g$ is in $G_2$ by Lemma \ref{SHK}. This shows that $G_1\subseteq G_2$. Similarly $G_2\subseteq G_1$.
\end{proof}

\begin{lemma}\label{UFRSD}
Let $X$ be a completely regular space. Let $G$ be an ideal of $C_B(X)$. Then
\[\lambda_{\overline{G}}X=\lambda_GX.\]
Here the bar denotes the closure in $C_B(X)$.
\end{lemma}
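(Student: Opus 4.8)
The plan is to prove the two inclusions separately, the easy one being immediate and the other following from a routine approximation argument at the level of the Stone--\v{C}ech extensions.

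First I would observe that since $G\subseteq\overline{G}$, it is immediate from Definition \ref{GHG} that
\[\lambda_GX=\bigcup_{g\in G}\mathrm{Coz}(g_\beta)\subseteq\bigcup_{g\in\overline{G}}\mathrm{Coz}(g_\beta)=\lambda_{\overline{G}}X.\]
So the content of the lemma is the reverse inclusion $\lambda_{\overline{G}}X\subseteq\lambda_GX$.

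For this, I would take $t$ in $\lambda_{\overline{G}}X$, so that $f_\beta(t)\neq 0$ for some $f$ in $\overline{G}$, and pick a sequence $g_1,g_2,\ldots$ in $G$ with $g_n\rightarrow f$ in $C_B(X)$. By Lemma \ref{LKHGF} (in particular part (6) applied to the differences $g_n-f$) we have $\|g_n^\beta-f^\beta\|=\|g_n-f\|\rightarrow 0$, hence $g_n^\beta\rightarrow f^\beta$ uniformly on $\beta X$, and in particular $g_n^\beta(t)\rightarrow f^\beta(t)$. Since $f^\beta(t)\neq 0$, there is some positive integer $n$ with $g_n^\beta(t)\neq 0$, that is, $t$ is in $\mathrm{Coz}(g_n^\beta)\subseteq\lambda_GX$. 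This establishes $\lambda_{\overline{G}}X\subseteq\lambda_GX$ and completes the proof.

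There is no real obstacle here; the only point requiring any care is the passage from norm-convergence of the $g_n$ to $f$ on $X$ to pointwise convergence of the extensions at the point $t$ of $\beta X$, and this is handled entirely by the norm-preservation of the extension map recorded in Lemma \ref{LKHGF}(6). (One could alternatively note that $\overline{G}$ is a closed ideal and try to invoke Lemmas \ref{SHK} or \ref{HJGHF}, but since $G$ itself need not be closed the direct sequential argument above is cleaner.)
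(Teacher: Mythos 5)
Your proof is correct and follows essentially the same route as the paper: the trivial inclusion from $G\subseteq\overline{G}$, then for the reverse inclusion a sequence $g_n\rightarrow f$ in $G$, the norm identity $\|g_n^\beta-f^\beta\|=\|g_n-f\|$ from Lemma \ref{LKHGF}(6), and pointwise convergence at $t$ to find some $g_k$ with $g_k^\beta(t)\neq 0$. No differences worth noting.
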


\begin{proof}
It is clear that $\lambda_GX\subseteq\lambda_{\overline{G}}X$. Let $t$ be in $\lambda_{\overline{G}}X$. Then $f^\beta(t)\neq 0$ for some $f$ in $\overline{G}$. Let $g_1,g_2,\ldots$ be a sequence in $G$ such that $g_n\rightarrow f$. Then $g^\beta_n\rightarrow f^\beta$, as $\|g^\beta_n-f^\beta\|=\|g_n-f\|$ for all positive integers $n$. In particular, $g^\beta_n(t)\rightarrow f^\beta(t)$, which implies that $g^\beta_k(t)\neq 0$ for some positive integer $k$. Therefore $t$ is in $\lambda_GX$. This shows that $\lambda_{\overline{G}}X\subseteq\lambda_GX$.
\end{proof}

Recall that for a Banach space $E$ and a closed subspace $M$ of $E$, the quotient norm on (the quotient linear space) $E/M$ is defined by
\[\|x+M\|=\inf _{m\in M}\|x-m\|\]
for any $x$ in $E$. The quotient space $E/M$ equipped with the quotient norm is a Banach space.

\begin{lemma}\label{OPFS}
Let $X$ be a completely regular space. Let $G$ be a closed ideal of $C_B(X)$. Then for every ideal $F$ of $C_B(X)$ which contains $G$ we have
\[\overline{F}/G=\overline{F/G}.\]
Here the first bar denotes the closure in $C_B(X)$ and the second bar denotes the closure in $C_B(X)/G$.
\end{lemma}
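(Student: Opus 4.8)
The plan is to argue directly with the quotient norm and convergent sequences, since $C_B(X)/G$ is a Banach space and hence metrizable. First I would record that the statement makes sense: because $G\subseteq F\subseteq\overline{F}$, both $F/G=\{f+G:f\in F\}$ and $\overline{F}/G=\{f+G:f\in\overline{F}\}$ are linear subspaces of $C_B(X)/G$, so it is meaningful to compare the closure (in $C_B(X)/G$) of the former with the latter.

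The inclusion $\overline{F}/G\subseteq\overline{F/G}$ is the easy direction. Given $f$ in $\overline{F}$, choose a sequence $f_1,f_2,\ldots$ in $F$ with $f_n\to f$ in $C_B(X)$; since $\|(f_n+G)-(f+G)\|\leq\|f_n-f\|\to 0$ and each $f_n+G$ lies in $F/G$, the coset $f+G$ lies in $\overline{F/G}$. As every element of $\overline{F}/G$ has this form, the inclusion follows.

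For the reverse inclusion $\overline{F/G}\subseteq\overline{F}/G$ — which I expect to be the only genuine point — take $\xi$ in $\overline{F/G}$ and write $\xi=f+G$ for some $f$ in $C_B(X)$. There is a sequence $f_1,f_2,\ldots$ in $F$ with $f_n+G\to f+G$, that is, $\inf_{h\in G}\|f_n-f-h\|\to 0$. Since this infimum is almost attained, choose $h_n$ in $G$ with $\|f_n-f-h_n\|\leq\inf_{h\in G}\|f_n-f-h\|+1/n$, so that $f_n-h_n\to f$ in $C_B(X)$. The key observation is that $f_n-h_n$ lies in $F$, because $f_n$ is in $F$, $h_n$ is in $G$, and $G\subseteq F$; hence $f$ is the limit of a sequence in $F$, i.e.\ $f$ is in $\overline{F}$, so $\xi=f+G$ is in $\overline{F}/G$.

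Combining the two inclusions gives $\overline{F}/G=\overline{F/G}$. The only subtlety is the ``approximate infimum'' step in the reverse inclusion, used together with the fact that subtracting an element of $G$ from an element of $F$ keeps one inside $F$; everything else is a routine continuity estimate for the quotient map. (Alternatively, one could phrase this topologically: the canonical map $q\colon C_B(X)\to C_B(X)/G$ is a continuous open surjection, so $q(\overline{F})\subseteq\overline{q(F)}$ yields one inclusion, while $q^{-1}(q(\overline{F}))=\overline{F}+G=\overline{F}$ is closed, so $q(\overline{F})=\overline{F}/G$ is closed and contains $F/G$, yielding the other.)
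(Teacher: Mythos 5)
Your proof is correct and follows essentially the same route as the paper's: the easy inclusion via continuity of the quotient map, and the reverse inclusion by using the definition of the quotient norm to pick $h_n$ in $G$ with $f_n-h_n\to f$ and then noting $f_n-h_n\in F$ since $G\subseteq F$. The only cosmetic difference is that the paper extracts a subsequence with quotient norm $<1/n$ before choosing the correcting elements of $G$, whereas you add $1/n$ to the infimum directly, which is equally valid.
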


\begin{proof}
Consider an element of $\overline{F}/G$ of the form $h+G$ where $h$ is in $\overline{F}$. Let $f_1,f_2,\ldots$ be a sequence in $F$ such that $f_n\rightarrow h$. Note that
\[\big\|(f_n+G)-(h+G)\big\|=\big\|(f_n-h)+G\big\|\leq\|f_n-h\|\]
for all positive integers $n$. Therefore $f_n+G\rightarrow h+G$. Thus $h+G$, being the limit of a sequence in $F/G$, is in $\overline{F/G}$. That is $\overline{F}/G\subseteq\overline{F/G}$. To show the reverse inclusion, let $h+G$ be in $\overline{F/G}$, where $h$ is in $C_B(X)$. Then $f_n+G\rightarrow h+G$ where $f_1,f_2,\ldots$ is a sequence in $F$. Thus $\|(f_n+G)-(h+G)\|=\|(f_n-h)+G\|\rightarrow 0$. Let $f_{k_1},f_{k_2},\ldots$ be a subsequence of $f_1,f_2,\ldots$ which is defined in a way that
\[\big\|(f_{k_n}-h)+G\big\|<1/n\]
for all positive integers $n$. Using definition of quotient norm, for any positive integer $n$ there is some $g_n$ in $G$ such that $\|f_{k_n}+g_n-h\|<1/n$. Therefore $f_{k_n}+g_n\rightarrow h$. Thus $h$, being the limit of a sequence in $F$, is contained in $\overline{F}$. In particular, $h+G$ is in $\overline{F}/G$. This shows that $\overline{F/G}\subseteq\overline{F}/G$.
\end{proof}

We record the following known simple lemma for convenience.

\begin{lemma}\label{IS}
Let $X$ be a zero-dimensional space. Let $K$ be a compact subspace contained in an open subspace $U$ of $X$. Then there is an open-and-closed subspace $V$ of $X$ such that $K\subseteq V\subseteq U$.
\end{lemma}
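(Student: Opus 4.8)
The plan is to use the defining property of zero-dimensionality together with compactness of $K$ in the usual ``patching'' fashion.

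First I would fix, for each point $x$ of $K$, an open-and-closed subspace $V_x$ of $X$ with $x\in V_x\subseteq U$. This is possible precisely because $X$ is zero-dimensional: the open-and-closed subspaces of $X$ form an open base, so the open neighborhood $U$ of $x$ contains a basic (hence open-and-closed) neighborhood of $x$. The family $\{V_x:x\in K\}$ is then an open cover of $K$.

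Next I would invoke compactness of $K$ to extract a finite subcover $V_{x_1},\ldots,V_{x_n}$, and set $V=V_{x_1}\cup\cdots\cup V_{x_n}$. Then $K\subseteq V$ by the choice of subcover, and $V\subseteq U$ since each $V_{x_i}\subseteq U$. Finally, $V$ is open-and-closed in $X$ because a finite union of open-and-closed subspaces is open-and-closed. This $V$ witnesses the conclusion.

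There is no real obstacle here; the only point worth stating carefully is that ``open base of open-and-closed sets'' is exactly what lets one produce the $V_x$, and that finiteness of the subcover is what keeps the union closed (an infinite union of open-and-closed sets need not be closed). Everything else is immediate, so the argument is short.
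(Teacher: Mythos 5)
Your proof is correct and is essentially identical to the paper's own argument: basic open-and-closed neighborhoods inside $U$ at each point of $K$, a finite subcover by compactness, and the finite union, which remains open-and-closed. Nothing to add.
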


\begin{proof}
For every $x$ in $K$ there is an open-and-closed neighborhood $V_x$ of $x$ such that $V_x\subseteq U$. Now $\{V_x:x\in K\}$ is an open cover of $K$, therefore, by compactness of $K$ there are $V_{x_1},\ldots,V_{x_n}$ such that $K\subseteq V_{x_1}\cup\cdots\cup V_{x_n}$. Let $V$ denote the latter set. Then $V$ has the desired properties. \end{proof}

A non-zero ideal $J$ in a ring $R$ is called \textit{simple} if it does not contain any proper non-zero subideal, or, equivalently, if any non-zero element of $J$ generates $J$.

For a subset $A$ of a ring $R$ we denote by $\langle A\rangle$ the ideal in $R$ generated by $A$.

\begin{theorem}\label{LJF}
Let $X$ be a completely regular space. Let $H$ be a non-vanishing closed ideal in $C_B(X)$. The following are equivalent:
\begin{itemize}
\item[(1)] $\mathfrak{sp}(H)$ is totally disconnected.
\item[(2)] $\mathfrak{sp}(H)$ is zero-dimensional.
\item[(3)] Every closed subideal $G$ of $H$ is generated by idempotents, that is,
\[G=\overline{\big\langle g\in G:g^2=g\big\rangle}.\]
\item[(4)] For every closed subideal $G$ of $H$, the ideal $H/G$ is either zero, simple, or the closure of a decomposable ideal in $C_B(X)/G$.
\end{itemize}
The bar in $(3)$ denotes the closure in $C_B(X)$, and the closure in $(4)$ is in the quotient space $C_B(X)/G$.
\end{theorem}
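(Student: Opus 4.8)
The plan is to exploit that $\mathfrak{sp}(H)=\lambda_HX$ is locally compact Hausdorff, so that zero-dimensionality and total disconnectedness coincide for it (as recalled at the beginning of this section); this gives $(1)\Leftrightarrow(2)$ at once. I would then prove $(2)\Rightarrow(3)$, $(3)\Rightarrow(2)$, $(1)\Rightarrow(4)$ and $(4)\Rightarrow(1)$. Two facts will be used throughout: for a closed subideal $G$ of $H$, an element $h\in H$ lies in $G$ if and only if $h_\beta$ vanishes on $\beta X\setminus\lambda_GX$ (Lemmas \ref{HJGF} and \ref{SHK}), equivalently on $F:=\mathfrak{sp}(H)\setminus\lambda_GX$ (as every element of $H$ already vanishes off $\mathfrak{sp}(H)$); and $G\mapsto\lambda_GX$ is a bijection between the closed subideals of $H$ and the open subspaces of $\mathfrak{sp}(H)$ (Lemmas \ref{HJGF} and \ref{HJGHF}).

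For $(2)\Leftrightarrow(3)$ the key point is that the idempotents of $C_B(X)$ are exactly the characteristic functions $\chi_M$ of open-and-closed subsets $M$ of $X$, with $(\chi_M)_\beta=\chi_{\mathrm{cl}_{\beta X}M}$ and $\mathrm{cl}_{\beta X}M$ open-and-closed in $\beta X$. For $(2)\Rightarrow(3)$: given a closed subideal $G$ of $H$, it suffices by Lemmas \ref{HJGHF} and \ref{UFRSD} to show $\lambda_{G'}X=\lambda_GX$ where $G'=\langle g\in G:g^2=g\rangle$; for $t\in\lambda_GX$ use that the locally compact zero-dimensional space $\mathfrak{sp}(H)$ has a base of compact open sets to pick a compact open $V$ with $t\in V\subseteq\lambda_GX$, note $V$ is open-and-closed in $\beta X$, whence $\chi_{V\cap X}$ is an idempotent lying in $G$ by Lemma \ref{SHK} with $t\in\mathrm{Coz}((\chi_{V\cap X})_\beta)=V$, so $t\in\lambda_{G'}X$. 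For $(3)\Rightarrow(2)$: given $t$ and an open neighbourhood $U=\lambda_GX$ in $\mathfrak{sp}(H)$, condition $(3)$ with Lemma \ref{UFRSD} produces $g\in\langle g\in G:g^2=g\rangle$ with $g_\beta(t)\neq 0$; writing $g=\sum_kf_k\chi_{M_k}$ with each $\chi_{M_k}$ an idempotent of $G$, some $\mathrm{cl}_{\beta X}M_k$ is an open-and-closed set containing $t$ and contained in $\lambda_GX=U$, hence an open-and-closed neighbourhood of $t$ in $\mathfrak{sp}(H)$ inside $U$.

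For $(1)\Rightarrow(4)$: let $G$ be a closed subideal of $H$ and $F=\mathfrak{sp}(H)\setminus\lambda_GX$, a closed hence totally disconnected subspace of $\mathfrak{sp}(H)$. If $F=\emptyset$ then $G=H$ and $H/G=0$. If $F=\{p\}$ then $h\mapsto h_\beta(p)$ is a surjective algebra homomorphism $H\to\mathbb{F}$ with kernel $G$, so $H/G\cong\mathbb{F}$ is simple. If $\abs{F}\geq 2$ then, being totally disconnected, $F$ is disconnected, say $F=W_1\cup W_2$ with $W_1,W_2$ disjoint, non-empty, open in $F$; put $V_i=W_i\cup\lambda_GX$ and let $A_i=\{h\in H:h_\beta|_{\beta X\setminus V_i}=0\}$, a closed subideal of $H$ with $\lambda_{A_i}X=V_i$ (Lemma \ref{HJGF}). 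Then $A_1\cap A_2=\{h\in H:h_\beta|_{\beta X\setminus(V_1\cap V_2)}=0\}=G$ since $V_1\cap V_2=\lambda_GX$, while $\lambda_{A_1+A_2}X=V_1\cup V_2=\mathfrak{sp}(H)$ (Lemma \ref{JHGF}) forces $\overline{A_1+A_2}=H$ (Lemmas \ref{UFRSD} and \ref{HJGHF}), and $A_i\supsetneq G$ because $V_i\supsetneq\lambda_GX$. Hence $D_i:=A_i/G$ are non-zero ideals of $C_B(X)/G$ with $D_1\cap D_2=0$, so $D_1\oplus D_2=(A_1+A_2)/G$ is a decomposable ideal of $C_B(X)/G$, and Lemma \ref{OPFS} gives $\overline{D_1\oplus D_2}=\overline{A_1+A_2}/G=H/G$.

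For $(4)\Rightarrow(1)$, suppose $\mathfrak{sp}(H)$ is not totally disconnected: pick a connected $C\subseteq\mathfrak{sp}(H)$ with $\abs{C}\geq 2$, set $F=\mathrm{cl}_{\mathfrak{sp}(H)}C$ (connected, $\abs{F}\geq 2$) and $G=\{h\in H:h_\beta|_F=0\}$, so $\lambda_GX=\mathfrak{sp}(H)\setminus F$ by Lemma \ref{HJGF}. Then $H/G\neq 0$; it is not simple, since for any $p\in F$ the set $M_p=\{h+G:h\in H,\ h_\beta(p)=0\}$ is a proper subideal of $H/G$ which is non-zero — taking $q\in F\setminus\{p\}$, a continuous $\phi:\mathfrak{sp}(H)\to[0,1]$ with $\phi(q)=1$, $\phi(p)=0$ and compact support not containing $p$, and extending $\phi$ by zero over $\beta X$, yields via Lemma \ref{SHK} an $f\in H\setminus G$ with $f_\beta(p)=0$, i.e. $0\neq f+G\in M_p$; and $H/G$ is not the closure of a decomposable ideal of $C_B(X)/G$, for if $H/G=\overline{D_1\oplus D_2}$ with $D_1,D_2$ non-zero ideals of $C_B(X)/G$ and $\pi:C_B(X)\to C_B(X)/G$ the quotient map, then $A_i:=\pi^{-1}(D_i)$ satisfy $G\subsetneq A_i\subseteq H$, $A_1\cap A_2=\pi^{-1}(D_1\cap D_2)=G$, and $\overline{A_1+A_2}=H$ by Lemma \ref{OPFS}, so $V_i:=\lambda_{A_i}X$ satisfy $V_1\cup V_2=\mathfrak{sp}(H)$ (Lemma \ref{JHGF}), $V_i\supsetneq\lambda_GX$, and $V_1\cap V_2=\lambda_{A_1\cap A_2}X=\lambda_GX$ — here $\lambda_{A_1\cap A_2}X=\lambda_{A_1}X\cap\lambda_{A_2}X$, the nontrivial inclusion following by considering products $g_1g_2$ of elements $g_i\in A_i$ — whence $W_i:=V_i\cap F$ give a separation of $F$, contradicting connectedness. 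This contradicts $(4)$ for $G$, so $\mathfrak{sp}(H)$ is totally disconnected. The step I expect to be the main obstacle is this $(4)$-direction: since $C_B(X)/G$ is not visibly an algebra of continuous functions, the correspondence between ring-theoretic decompositions of $H/G$ and topological separations of $F$ must be routed through Lemma \ref{OPFS} and the calculus of $\lambda_{(\cdot)}X$ rather than through any direct representation, the trichotomy $F=\emptyset$ / $\abs{F}=1$ / $\abs{F}\geq 2$ has to be matched exactly with "zero" / "simple" / "closure of decomposable", and the extension-by-zero construction of the compactly supported $f$ witnessing non-simplicity needs a little care.
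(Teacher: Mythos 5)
Your argument is correct, and while it follows the paper's global architecture (the same pairings $(1)\Leftrightarrow(2)$, $(2)\Leftrightarrow(3)$, $(1)\Leftrightarrow(4)$, and the same trichotomy on $F=\mathfrak{sp}(H)\setminus\lambda_GX$ being empty, a singleton, or disconnected), the internal mechanisms differ in a way worth noting. For $(2)\Leftrightarrow(3)$ the paper approximates a given $g\in G$ in norm by multiples of characteristic functions, invoking the approximation Lemma \ref{JJHF} twice; you instead reduce everything to the identity $\lambda_{G'}X=\lambda_GX$ and conclude via Lemmas \ref{UFRSD} and \ref{HJGHF}, using a compact-open base of the locally compact zero-dimensional space $\mathfrak{sp}(H)$ for one direction and the finite-sum form of elements of $\langle g\in G:g^2=g\rangle$ for the other; this is shorter and pushes all the analytic work into the already-proved uniqueness lemma (whose proof rests on Lemma \ref{SHK}, hence on Lemma \ref{JJHF}), whereas the paper's version is self-contained at that point. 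Similarly, in $(4)\Rightarrow(1)$ the paper covers the connected set by $U_1\cup U_2$ through an $\varepsilon$-estimate against an $h$ with $|h_\beta(t)|>1$ (Lemma \ref{DFH}), while you get $V_1\cup V_2=\mathfrak{sp}(H)$ structurally from Lemmas \ref{OPFS}, \ref{JHGF}, \ref{UFRSD} and \ref{HJGHF}; you also work with the closure of an arbitrary connected set rather than a component, which is immaterial. Three small points should be spelled out, though each is a one-liner with tools you already cite: in $(4)\Rightarrow(1)$ the claim $V_i\supsetneq\lambda_GX$ (equivalently $W_i\neq\emptyset$) needs the remark that if $\lambda_{A_i}X=\lambda_GX$ then every $a\in A_i\subseteq H$ has $a_\beta$ vanishing off $\lambda_GX$, so $A_i\subseteq G$ by Lemma \ref{SHK}, contradicting $A_i\supsetneq G$ (this is exactly how the paper gets non-emptiness of its $U_i$); in $(1)\Rightarrow(4)$ you should note that $V_i=W_i\cup\lambda_GX=\mathfrak{sp}(H)\setminus W_j$ is open in $\mathfrak{sp}(H)$ because $W_j$, being closed in the closed set $F$, is closed in $\mathfrak{sp}(H)$ — this is what licenses the appeal to Lemma \ref{HJGF}; and in the singleton case the passage from $H/G\cong\mathbb{F}$ to simplicity should record that any non-zero subideal of $H/G$ is stable under multiplication by classes of constant functions, hence is a non-zero $\mathbb{F}$-subspace of a one-dimensional space (alternatively, argue as the paper does that every non-zero element generates). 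None of these affects the validity of the proof.
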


\begin{proof}
Note that $\mathfrak{sp}(H)$ is locally compact, as is open in the compact space $\beta X$ by Theorem \ref{TRES}. Therefore (1) and (2) are equivalent. We prove the equivalence of (1) and (4), and the equivalence of (2) and (3).

(1) \textit{implies} (4). Let $G$ be a closed subideal of $H$. We may assume that $G$ is proper. Let \[C=\lambda_HX\setminus\lambda_GX.\]
Note that $\lambda_HX\neq\lambda_GX$ by Lemma \ref{HJGHF}, and thus $C$ is non-empty. We consider the following two cases.
\begin{itemize}
\item[(i).] Suppose that $\mathrm{card}(C)=1$. Let $C=\{t\}$. We show that $H/G$ is a simple ideal, or, equivalently, that every non-zero element of $H/G$ generates $H/G$. Let $h+G$ be a non-zero element in $H/G$. Then $h_\beta$ does not vanish at $t$, as otherwise, $h_\beta$ vanishes outside $\lambda_GX$ (since for $h$, being an element of $H$, $h_\beta$ also vanishes outside $\lambda_HX$) and therefore $h$ is in $G$ by Lemma \ref{SHK}, which is not correct. Let $f+G$ be in $H/G$. Let
\[g=f-\frac{f_\beta(t)}{h_\beta(t)}h.\]
Clearly, $g_\beta(t)=0$. Therefore, as we have just argued, $g$ is in $G$. Thus $f+G$ lies in the ideal generated by  $h+G$.
\item[(ii).] Suppose that $\mathrm{card}(C)>1$. By our assumption $C$ is disconnected. Let $C_1$ and $C_2$ be a separation for $C$. Let
\[H_i=\{h\in H:h_\beta|_{C_i}=0\},\]
where $i=1,2$. It is clear that $H_1$ and $H_2$ are ideals in $C_B(X)$. It is also clear that $H_1$ and $H_2$ both contain $G$, as for any element $g$ of $G$, $g_\beta$ vanishes outside $\lambda_GX$, and thus vanishes on $C$ (and therefore on both $C_1$ and $C_2$). We show that $H_1/G$ and $H_2/G$ are non-zero, have zero intersection, and \begin{equation}\label{OJHF}
\frac{H}{G}=\overline{\frac{H_1}{G}\oplus\frac{H_2}{G}}.
\end{equation}

We check that $H_1/G$ is non-zero; that $H_2/G$ is non-zero follows analogously. Let $t$ be in $C_2$. The set $C_1$ is closed in $C$, so $C_1=A\cap C$ for some closed subspace $A$ of $\beta X$. Let $B=A\cup(\beta X\setminus\lambda_HX)$. Then $B$ is a closed subspace of $\beta X$ which does not contain $t$. Let $F:\beta X\rightarrow[0,1]$ be a continuous mapping such that $F(t)=1$ and $F|_B=0$. Then $F|_X$ is in $H$ by Lemma \ref{SHK}, as $F$ vanishes outside $\lambda_HX$, and $F|_{C_1}=0$. (Note that $(F|_X)_\beta=F$, as $(F|_X)_\beta$ and $F$ are continuous and agree on the dense subspace $X$ of $\beta X$.) Therefore $F|_X$ is in $H_1$. On the other hand, $F|_X$ is not in $G$, as $F$ does not vanish at $t$, which is outside $\lambda_GX$. Therefore $F|_X+G$ is a non-zero element of $H_1/G$. Observe that it also follows from the above argument that
\begin{equation}\label{OASF}
C_2\subseteq\lambda_{H_1}X\quad\text{and}\quad C_1\subseteq\lambda_{H_2}X.
\end{equation}

Next, we check that $H_1/G\cap H_2/G=0$. Let $f+G$ be in $H_1/G\cap H_2/G$. Then $f=h_1+g_1=h_2+g_2$, where $h_1$ and $h_2$ are in $H_1$ and $H_2$, respectively, and $g_1$ and $g_2$ are in $G$. Then $f^2=h_1h_2+h_1g_2+g_1h_2+g_1g_2$. Thus $f^2$ lies in $H_1\cap H_2$, and therefore $f_\beta^2$ vanishes on $C$, as vanishes on both $C_1$ and $C_2$. In particular, $f_\beta$ vanishes on $C$, and thus vanishes outside $\lambda_GX$. By Lemma \ref{SHK} this implies that $f$ is in $G$. That is $f+G=0$.

Finally, we check that $H_1/G+H_2/G=H/G$. Note that
\[\lambda_{\overline{H_1+H_2}}X=\lambda_{\overline{H_1+H_2+G}}X=\lambda_{H_1+H_2+G}X\]
using Lemma \ref{UFRSD}. (We need to check before that $H_1$ and $H_2$ are closed in $C_B(X)$. But this follows easily, as if $i=1,2$ and $f_n\rightarrow f$ for some sequence $f_1,f_2,\ldots$ in $H_i$, then $f^\beta_n\rightarrow f^\beta$, as $\|f^\beta_n-f^\beta\|=\|f_n-f\|$ for all positive integers $n$, and therefore $f^\beta_n(u)\rightarrow f^\beta(u)$ for any $u$ in $\beta X$. Since $f^\beta_n|_{C_i}=0$ for all positive integers $n$, this will imply that $f^\beta|_{C_i}=0$. Therefore $f$ is in $H_i$, as $f$, being the limit of a sequence in $H$, is in $H$.) Also,
\[\lambda_{H_1+H_2+G}X=\lambda_{H_1}X\cup\lambda_{H_2}X\cup\lambda_GX\]
by Lemma \ref{JHGF}, and
\[\lambda_{H_1}X\cup\lambda_{H_2}X\cup\lambda_GX\supseteq C_2\cup C_1\cup\lambda_GX=C\cup\lambda_GX=\lambda_HX\]
by (\ref{OASF}). Therefore
\[\lambda_{\overline{H_1+H_2}}X\supseteq\lambda_HX.\]
The reverse inclusion holds trivially. By Lemma \ref{HJGHF} this implies that $\overline{H_1+H_2}=H$. Using Lemma \ref{OPFS} we have
\[\overline{H_1/G\oplus H_2/G}=\overline{H_1/G+H_2/G}=\overline{(H_1+H_2)/G}
=\overline{H_1+H_2}/G=H/G.\]
\end{itemize}

(4) \textit{implies} (1). Let $C$ be a component of $\lambda_HX$. Suppose to the contrary that $C$ has more than one element. Note that $C$, being a component in $\lambda_HX$, is closed in $\lambda_HX$. In particular, $\lambda_HX\setminus C$ is open in $\lambda_HX$, and therefore $\lambda_HX\setminus C=\lambda_GX$ for some closed subideal $G$ of $H$ by Lemma \ref{HJGF}. By our assumption $H/G$ is either zero, simple, or the closure of a decomposable ideal in $C_B(X)/G$. We check that neither of these conclusions can indeed happen.

It is clear that $H/G$ is not zero, as $G\neq H$, since $\lambda_GX\neq\lambda_HX$.

We now check that $H/G$ is not simple. Let $x$ be in $C$. Let
\[I=\big\{h\in H:h_\beta(x)=0\big\}.\]
Then $I$ is a subideal of $H$, as one can easily check. It is also clear that $I$ contains $G$, as for any $g$ in $G$, $g_\beta$ vanishes outside $\lambda_GX$ and therefore on $C$ (as $C=\lambda_HX\setminus\lambda_GX$), and thus on $x$. We check that both inclusions of $I$ in $H$ and $G$ in $I$ are proper, that is, $I/G$ is a non-zero proper ideal of $H/G$. Let $y$ be an element of $C$ distinct from $x$. Let $\phi:\beta X\rightarrow[0,1]$ be a continuous mapping such that
\[\phi(y)=1\quad\text{and}\quad\phi|_{\{x\}\cup(\beta X\setminus\lambda_HX)}=0.\]
Then $f=\phi|_X$ is in $H$ by Lemma \ref{SHK} (as $f_\beta$, which coincides with $\phi$, vanishes outside $\lambda_HX$) and therefore is in $I$ (as $\phi(x)=0$). But $f$ is not in $G$, as $\phi$ does not vanish outside $\lambda_GX$ (since $\phi(y)\neq 0$ and $y$ is in $\beta X\setminus\lambda_GX$). Next, let $\psi:\beta X\rightarrow[0,1]$ be a continuous mapping such that
\[\psi(x)=1\quad\text{and}\quad \psi|_{\beta X\setminus\lambda_HX}=0.\]
Then $h=\psi|_X$ is in $H$ by Lemma \ref{SHK}, but is not in $I$ (as $\psi(x)\neq0$).

Finally, we check that $H/G$ is not the closure in $C_B(X)/G$ of a decomposable ideal in $C_B(X)/G$. Suppose otherwise that
\begin{equation}\label{GVFFD}
\frac{H}{G}=\overline{\frac{H_1}{G}\oplus\frac{H_2}{G}},
\end{equation}
where $H_1/G$ and $H_2/G$ are non-zero ideals in $C_B(X)/G$. We show that
\[U_1=\lambda_{H_1}X\setminus\lambda_GX\quad\text{and}\quad U_2=\lambda_{H_2}X\setminus\lambda_GX\]
form a separation for $C$. Note that $U_1$ and $U_2$ are both open in $C$.

We check that $U_1\cup U_2=C$. It is clear that $U_1\cup U_2\subseteq C$. To check the reverse inclusion, let $t$ be in $C$. Then $t$ is in $|h_\beta|^{-1}((1,\infty))$ for some $h$ in $H$ by Lemma \ref{DFH}. By (\ref{GVFFD}),
\[(h^1_n+G)+(h^2_n+G)\longrightarrow h+G\]
where $h^i_1,h^i_2,\ldots$ is a sequence in $H_i$ and $i=1,2$. Let $k$ be a positive integer with $\|(h^1_k+h^2_k)+G- (h+G)\|<1$. Then $\|h^1_k+h^2_k-h+g\|<1$ for some $g$ in $G$. Then \[\big|(h^1_k)_\beta(t)+(h^2_k)_\beta(t)-h_\beta(t)+g_\beta(t)\big|
\leq\big\|(h^1_k)_\beta+(h^2_k)_\beta-h_\beta+g_\beta\big\|<1.\]
But $g_\beta(t)=0$, as $t$, being in $C$, is not in $\lambda_GX$. Since $|h_\beta(t)|>1$, we therefore have either $(h^1_k)_\beta(t)\neq 0$ or $(h^2_k)_\beta(t)\neq 0$. Thus $t$ is in $U_1$ or $U_2$. This shows that $C\subseteq U_1\cup U_2$.

Next, we check that $U_1\cap U_2=\emptyset$. Let $s$ be in $U_1\cap U_2$. Then $f^i_\beta(s)\neq0$ where $f^i$ is in $H_i$ and $i=1,2$. In particular, $f^1_\beta f^2_\beta(s)\neq 0$. Note that $(f^1+G)(f^2+G)$ is in $(H_1/G)\cap (H_2/G)$, and is therefore $0$ by (\ref{GVFFD}). Therefore $f^1f^2$ is in $G$. But then $s$ is in $\lambda_GX$, which is not correct by the choice of $s$.

To conclude the proof, we need to check that $U_1$ and $U_2$ are both non-empty. Note that $H_1/G$ is non-zero. Thus there is an element $f$ of $H_1$ which is not in $G$. By Lemma \ref{SHK} then $f_\beta$ does not vanish outside $\lambda_GX$. Let $u$ be an element of $\beta X\setminus\lambda_GX$ such that $f_\beta(u)\neq0$. Then $u$ is in $\lambda_{H_1}X$, but not in $\lambda_GX$. That is, $u$ is in $U_1$, and therefore $U_1$ is non-empty. A similar argument shows that $U_2$ is also non-empty, concluding the proof that $U_1$ and $U_2$ is a separation for $C$.

This shows that $\lambda_HX$ has no component of cardinality greater than $1$.

(2) \textit{implies} (3). Let $G$ be a closed subideal of $H$. We prove that
\[G\subseteq\overline{\big\langle g\in G:g^2=g\big\rangle}\]
The reverse inclusion holds trivially.

Let $g$ be in $G$. Let $n$ be a positive integer. Note that $|g^\beta|^{-1}([1/n,\infty))$ is compact and is contained in the open subspace $\lambda_GX$ of $\beta X$. Thus, there is an open subspace $V_n$ of $\beta X$ such that
\begin{equation}\label{DF}
|g^\beta|^{-1}\big([1/n,\infty)\big)\subseteq V_n\subseteq\mathrm{cl}_{\beta X}V_n\subseteq\lambda_GX.
\end{equation}
Since $\mathfrak{sp}(H)$ is zero-dimensional, by Lemma \ref{IS} there is an open-and-closed subspace $U_n$ of $\mathfrak{sp}(H)$ such that
\[|g^\beta|^{-1}\big([1/n,\infty)\big)\subseteq U_n\subseteq V_n.\]
Observe that $\mathrm{cl}_{\beta X}U_n\subseteq\lambda_GX$ from (\ref{DF}). We intersect the two sides of the above relation with $X$ to obtain
\[|g|^{-1}\big([1/n,\infty)\big)\subseteq X\cap U_n=\chi^{-1}_{(X\cap U_n)}\big([1,\infty)\big),\]
where $\chi$ is to denote the characteristic function. By Lemma \ref{JJHF} it follows that
\[f_n\chi_{(X\cap U_n)}\longrightarrow g\]
for some sequence $f_1,f_2,\ldots$ in $C_B(X)$. We check that $\chi_{(X\cap U_n)}$
is in $G$ for each positive integer $n$, which concludes the proof, as $\chi_{(X\cap U_n)}$ is clearly an idempotent.

So, let $U$ be an open-and-closed subspace of $\mathfrak{sp}(H)$ such that $\mathrm{cl}_{\beta X}U\subseteq\lambda_GX$. By compactness, using Lemma \ref{DFH}, it follows that
\[\mathrm{cl}_{\beta X}U\subseteq|g^\beta_1|^{-1}\big((1,\infty)\big)\cup\cdots\cup|g^\beta_k|^{-1}\big((1,\infty)\big)\subseteq|g^\beta|^{-1}\big((1,\infty)\big),\]
where $g=|g_1|^2+\cdots+|g_k|^2$ and $g_i$ is in $G$ for any $i=1,\ldots,k$. Observe that $g$ is in $G$, as $g=g_1\overline{g_1}+\cdots+g_k\overline{g_k}$. We intersect the two sides of the above relation with $X$ to obtain
\[X\cap U\subseteq|g|^{-1}\big([1,\infty)\big).\]
But
\[\chi^{-1}_{(X\cap U)}\big([1/n,\infty)\big)=X\cap U\]
for any positive integer $n$ and therefore
\[\chi^{-1}_{(X\cap U)}\big([1/n,\infty)\big)\subseteq|g|^{-1}\big([1,\infty)\big).\]
By Lemma \ref{JJHF} it now follows that
\[f_ng\longrightarrow\chi_{(X\cap U)}\]
for some sequence $f_1,f_2,\ldots$ in $C_B(X)$. That is, $\chi_{(X\cap U)}$ is the limit of a sequence of elements in  $G$. Therefore $\chi_{(X\cap U)}$ is in $G$, as $G$ is closed in $C_B(X)$.

(3) \textit{implies} (2). Let $U$ be an open subspace of $\mathfrak{sp}(H)$ and let $t$ be in $U$. We find an open-and-closed (in $\mathfrak{sp}(H)$) neighborhood of $t$ which is contained in $U$. Note that $U$ is open in $\beta X$, as $\mathfrak{sp}(H)$ is so by Theorem \ref{TRES}. Let $F:\beta X\rightarrow[0,1]$ be a continuous mapping such that
\[F(t)=1\quad\text{and}\quad F|_{\beta X\setminus U}=0.\]
Let $f=F|_X$. Consider the closed subideal $G=\overline{Hf}$ of $H$. Since $t$ is in $\mathfrak{sp}(H)$, by the representation of $\mathfrak{sp}(H)$ in Lemma \ref{DFH} there is some $h$ in $H$ such that $|h^\beta|^{-1}((1,\infty))$ contains $t$. In particular $|h^\beta(t)f^\beta(t)|\geq 1$, as $f^\beta=F$ (since $f^\beta$ and $F$ coincide on $X$ and $X$ is dense in $\beta X$). By our assumption, $G$ is generated by its idempotents. Thus, there are idempotent elements $u_i$ in $G$ and elements $f_i$ in $C_B(X)$ where $i=1,\ldots,n$ such that
\[\|f_1u_1+\cdots+f_nu_n-hf\|<1.\]
Note that
\[\|f^\beta_1u^\beta_1+\cdots+f^\beta_nu^\beta_n-h^\beta f^\beta\|=\|f_1u_1+\cdots+f_nu_n-hf\|.\]
In particular,
\[\big|f^\beta_1(t)u^\beta_1(t)+\cdots+f^\beta_n(t)u^\beta_n(t)-h^\beta(t) f^\beta(t)\big|\leq\|f^\beta_1u^\beta_1+\cdots+f^\beta_nu^\beta_n-h^\beta f^\beta\|<1.\]
This implies that
\[f^\beta_1(t)u^\beta_1(t)+\cdots+f^\beta_n(t)u^\beta_n(t)\neq 0,\]
as $|h^\beta(t)f^\beta(t)|\geq 1$. Therefore $f^\beta_j(t)u^\beta_j(t)\neq 0$, and thus in particular $u^\beta_j(t)\neq 0$ for some $j=1,\ldots,n$. Observe that $u_j=\chi_V$ for some subset $V$ of $X$, as $u_j$ is an idempotent. Moreover, $V$ is both open and closed in $X$, and therefore, its closure $\mathrm{cl}_{\beta X}V$ is open and closed in $\beta X$. In particular, $\chi_{\mathrm{cl}_{\beta X}V}$ is a continuous mapping on $\beta X$, which coincide with $\chi^\beta_V$, as the two mappings agree on the dense subspace $X$ of $\beta X$. That is $u^\beta_j=\chi_{\mathrm{cl}_{\beta X}V}$, and in particular $\mathrm{Coz}(u^\beta_j)=\mathrm{cl}_{\beta X}V$. Note that $t$ is in $\mathrm{Coz}(u^\beta_j)$ and $\mathrm{Coz}(u^\beta_j)\subseteq\lambda_GX$. Thus $\mathrm{cl}_{\beta X}V$ is an open and closed neighborhood of $t$ in $\beta X$ (and therefore in $\mathfrak{sp}(H)$) contained in $\lambda_GX$. But $\lambda_GX=\lambda_{Hf}X$ by Lemma \ref{UFRSD}, and therefore
\[\lambda_GX=\bigcup_{h\in H}\mathrm{Coz}(h^\beta f^\beta)\subseteq\mathrm{Coz}(f^\beta)\subseteq U,\]
by the definition of $F$ (which is the same as $f^\beta$).
\end{proof}

It is known that a completely regular space $X$ is strongly zero-dimensional if and only if $\beta X$ is strongly zero-dimensional if and only if $\beta X$ is zero-dimensional. (See Theorem 6.2.12 of \cite{E} and Proposition 3.34 of \cite{W}.) This will be used in the proof of the following theorem.

\begin{theorem}\label{KJH}
Let $X$ be a completely regular space. Let $H$ be a non-vanishing closed ideal in $C_B(X)$. The following are equivalent:
\begin{itemize}
\item[(1)] $\mathfrak{sp}(H)$ is strongly zero-dimensional.
\item[(2)] $X$ is strongly zero-dimensional.
\item[(3)] Every closed ideal $G$ of $C_B(X)$ is generated by idempotents, that is,
\[G=\overline{\big\langle g\in G:g^2=g\big\rangle}.\]
\item[(4)] For every closed ideal $G$ of $C_B(X)$, the ideal $C_B(X)/G$ is either zero, simple, or the closure of a decomposable ideal in $C_B(X)/G$.
\end{itemize}
The bar in $(3)$ denotes the closure in $C_B(X)$, and the closure in $(4)$ is in the quotient space $C_B(X)/G$.
\end{theorem}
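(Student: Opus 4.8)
The strategy is to show that each of (1)--(4) is equivalent to the single condition that $\beta X$ is zero-dimensional, using essentially only Theorem \ref{LJF} together with the quoted characterization of strong zero-dimensionality (Theorem 6.2.12 of \cite{E} and Proposition 3.34 of \cite{W}).

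First I would observe that $C_B(X)$ is itself a non-vanishing closed ideal of $C_B(X)$: it is the whole algebra, hence closed, and the constant function $1$ vanishes nowhere. Its spectrum is all of $\beta X$, since $1_\beta$ is the constant function $1$ on $\beta X$ and so $\mathfrak{sp}(C_B(X))=\mathrm{Coz}(1_\beta)=\beta X$ by Theorem \ref{TRES}. Moreover the closed subideals of $C_B(X)$ are precisely the closed ideals of $C_B(X)$. Consequently Theorem \ref{LJF}, applied with $H$ replaced by $C_B(X)$, says that ``$\beta X$ is zero-dimensional'' is equivalent to ``every closed ideal $G$ of $C_B(X)$ is generated by idempotents'' and to ``for every closed ideal $G$ of $C_B(X)$ the ideal $C_B(X)/G$ is zero, simple, or the closure of a decomposable ideal in $C_B(X)/G$.'' These are exactly conditions (3) and (4), so (3) $\Leftrightarrow$ (4) $\Leftrightarrow$ ($\beta X$ zero-dimensional).

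It remains to bring in (1) and (2). For (2): by the quoted theorem, the completely regular space $X$ is strongly zero-dimensional if and only if $\beta X$ is zero-dimensional, which is the desired equivalence. For (1): the space $\mathfrak{sp}(H)$ is a subspace of the compact Hausdorff space $\beta X$, hence completely regular, and it contains $X$ as a dense subspace by Theorem \ref{TRES}; therefore $\beta(\mathfrak{sp}(H))=\beta X$ by the listed basic property $\beta M=\beta X$ for $X\subseteq M\subseteq\beta X$. Applying the quoted theorem now to $\mathfrak{sp}(H)$ in place of $X$ gives that $\mathfrak{sp}(H)$ is strongly zero-dimensional if and only if $\beta(\mathfrak{sp}(H))=\beta X$ is zero-dimensional. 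Chaining all four equivalences completes the proof.

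I do not expect a genuine obstacle here: the real content sits in Theorem \ref{LJF} and in the external fact that strong zero-dimensionality of a Tychonoff space is detected by zero-dimensionality of its Stone--\v{C}ech compactification. The only points requiring a sentence of justification are the identity $\mathfrak{sp}(C_B(X))=\beta X$, the identification of ``closed subideal of $C_B(X)$'' with ``closed ideal of $C_B(X)$'' (so that Theorem \ref{LJF} transfers verbatim), and the complete regularity of $\mathfrak{sp}(H)$ (so that the external theorem applies to it as well).
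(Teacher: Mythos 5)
Your proposal is correct and follows essentially the same route as the paper: both reduce everything to zero-dimensionality of $\beta X$, using $\beta(\mathfrak{sp}(H))=\beta X$ together with the cited fact that strong zero-dimensionality is detected by $\beta$, and both obtain the equivalence of (3) and (4) by applying Theorem \ref{LJF} to the ideal $C_B(X)$ itself, whose spectrum is $\beta X$. Your explicit remarks that $\mathfrak{sp}(C_B(X))=\mathrm{Coz}(1_\beta)=\beta X$ and that closed subideals of $C_B(X)$ are just its closed ideals are exactly the (implicit) justifications the paper relies on.
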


\begin{proof}
Observe that $\mathfrak{sp}(H)$ is strongly zero-dimensional if and only if $\beta(\mathfrak{sp}(H))$ is strongly zero-dimensional. But $\beta(\mathfrak{sp}(H))=\beta X$, as $X\subseteq\mathfrak{sp}(H)\subseteq\beta X$ by Theorem \ref{TRES}. Thus $\mathfrak{sp}(H)$ is strongly zero-dimensional if and only if $\beta X$ is strongly zero-dimensional if and only if $X$ is strongly zero-dimensional. This shows the equivalence of (1) and (2). Observe that $\beta X=\beta(\mathfrak{sp}(C_B(X)))$. Now, replacing $H$ by $C_B(X)$ in Theorem \ref{LJF}, shows that (3) holds if and only if (4) holds if and only if $\beta X$ is zero-dimensional, which is equivalent to (2).
\end{proof}

In our next theorem  we consider total separatedness. We need a few lammas.

\begin{lemma}\label{JHGD}
Let $X$ ba a totally separated space. Then for every disjoint compact subspaces $A$ and $B$ of $X$ there is a separation $U$ and $V$ of $X$ such that $U$ and $V$ contain $A$ and $B$, respectively.
\end{lemma}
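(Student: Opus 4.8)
The plan is to reduce the separation of the two compact sets to the total separatedness of points by a standard compactness-plus-Boolean-algebra argument. First I would recall the equivalent formulation of total separatedness used in the paper: in a totally separated space, any two distinct points $x$ and $y$ can be separated by an open-and-closed set $W$ with $x\in W$ and $y\notin W$. The key observation is that the collection of open-and-closed subsets of $X$ forms a Boolean algebra (closed under finite unions, finite intersections, and complements), so any finite Boolean combination of the separating sets produced below is again open-and-closed.

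The main steps, in order, are as follows. Fix disjoint compact $A$ and $B$. Step one: for each pair $(a,b)$ with $a\in A$ and $b\in B$, total separatedness gives an open-and-closed set $W_{a,b}$ with $a\in W_{a,b}$ and $b\notin W_{a,b}$. Step two: fix $a\in A$; the sets $\{X\setminus W_{a,b}:b\in B\}$ form an open cover of the compact set $B$, so finitely many $X\setminus W_{a,b_1},\ldots,X\setminus W_{a,b_m}$ cover $B$; put $W_a=W_{a,b_1}\cap\cdots\cap W_{a,b_m}$. Then $W_a$ is open-and-closed, $a\in W_a$, and $W_a\cap B=\emptyset$. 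Step three: now $\{W_a:a\in A\}$ is an open cover of the compact set $A$, so finitely many $W_{a_1},\ldots,W_{a_k}$ cover $A$; put $U=W_{a_1}\cup\cdots\cup W_{a_k}$. Then $U$ is open-and-closed, $A\subseteq U$, and $U\cap B=\emptyset$. Step four: set $V=X\setminus U$. Since $U$ is open-and-closed, so is $V$; clearly $B\subseteq V$, $A\subseteq U$, $U\cap V=\emptyset$, and $U\cup V=X$, so $\{U,V\}$ is the desired separation of $X$ with $A\subseteq U$ and $B\subseteq V$.

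I do not expect any serious obstacle here; the argument is the textbook "compactness upgrades pointwise separation to separation of compact sets" pattern, and the only thing that must be checked carefully is that each Boolean combination of open-and-closed sets remains open-and-closed, which is immediate. One minor point to state explicitly is that $A$ and $B$ are allowed to be empty, in which case the statement is trivial (take $U=\emptyset$ or $V=\emptyset$); otherwise the covers produced in steps two and three are genuinely finite and non-empty, and the construction goes through verbatim.
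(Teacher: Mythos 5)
Your proof is correct and follows essentially the same route as the paper's: the standard two-stage compactness argument that upgrades pointwise clopen separation to separation of the compact sets, first handling a fixed point of $A$ against all of $B$, then covering $A$. The only cosmetic difference is that you intersect clopen neighborhoods of $a$ while the paper takes the complement of a finite union of clopen neighborhoods of points of $B$; these are the same Boolean manipulation.
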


\begin{proof}
Let $A$ and $B$ be disjoint compact subspaces of $X$. Fix some element $a$ in $A$. For each $b$ in $B$, by our assumption, there is an open-and-closed subspace $U_b$ of $X$ such that $U_b$ contains $b$ and $X\setminus U_b$ contains $a$. Since $B$ is compact there are elements $b_1,\ldots,b_n$ in $B$ such that $B\subseteq U_{b_1}\cup\cdots\cup U_{b_n}$; denote the latter set by $U$. Let $V_a=X\setminus U$. Then $V_a$ is an open-and-closed subspace of $X$ such that $a$ is in $V_a$ and $B\subseteq X\setminus V_a$. Now, since $A$ is compact there are elements $a_1,\ldots,a_m$ of $A$ such that $A\subseteq V_{a_1}\cup\cdots\cup V_{a_m}$. We denote the latter set by $W$. Then $W$ is an open-and-closed subspace of $X$ such that $W$ contains $A$ and $X\setminus W$ contains $B$.
\end{proof}

\begin{lemma}\label{HGFD}
Let $X$ be a completely regular space. Let $H$ be a non-vanishing closed ideal in $C_B(X)$. For a closed subideal $M$ of $H$ the following are equivalent:
\begin{itemize}
\item[(1)] $M$ is a maximal closed subideal of $H$.
\item[(2)] There is some $x$ in $\lambda_HX$ such that
\[\lambda_MX=\lambda_HX\setminus\{x\}.\]
\end{itemize}
\end{lemma}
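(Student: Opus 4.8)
The plan is to reduce the statement to a purely topological fact about the locally compact Hausdorff space $\lambda_HX=\mathfrak{sp}(H)$ — that its maximal proper open subspaces are precisely the sets $\lambda_HX\setminus\{x\}$ with $x\in\lambda_HX$ — and then transport this fact through the correspondence $G\mapsto\lambda_GX$ furnished by Lemmas \ref{HJGF}, \ref{SHK} and \ref{HJGHF}.

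The key preliminary step, which is where the real work lies, is to observe that $G\mapsto\lambda_GX$ is an order isomorphism from the poset of closed subideals of $H$ (ordered by inclusion) onto the poset of open subspaces of $\lambda_HX$. Surjectivity is Lemma \ref{HJGF}: every open $U\subseteq\lambda_HX$ equals $\lambda_GX$ for the closed subideal $G=\{g\in H:g_\beta|_{\beta X\setminus U}=0\}$. Injectivity is Lemma \ref{HJGHF}. Order-preservation holds in both directions: if $G_1\subseteq G_2$ then $\lambda_{G_1}X\subseteq\lambda_{G_2}X$ trivially, while conversely if $\lambda_{G_1}X\subseteq\lambda_{G_2}X$ and $g\in G_1$, then $\mathrm{Coz}(g_\beta)\subseteq\lambda_{G_1}X\subseteq\lambda_{G_2}X$, so $g_\beta$ vanishes on $\beta X\setminus\lambda_{G_2}X$ and hence $g\in G_2$ by Lemma \ref{SHK}; thus $G_1\subseteq G_2$.

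Next I would dispatch the topological fact. By Theorem \ref{TRES} the space $\lambda_HX$ is locally compact Hausdorff, in particular $T_1$, so every singleton $\{x\}$ is closed and $\lambda_HX\setminus\{x\}$ is a proper open subspace. It is maximal among such: if $\lambda_HX\setminus\{x\}\subseteq U\subsetneq\lambda_HX$ with $U$ open, then $\emptyset\neq\lambda_HX\setminus U\subseteq\{x\}$ forces $\lambda_HX\setminus U=\{x\}$, i.e. $U=\lambda_HX\setminus\{x\}$. Conversely, if $U$ is a maximal proper open subspace, choose $x$ in the nonempty set $\lambda_HX\setminus U$; then $U\subseteq\lambda_HX\setminus\{x\}\subsetneq\lambda_HX$ with $\lambda_HX\setminus\{x\}$ open, so maximality gives $U=\lambda_HX\setminus\{x\}$.

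Finally I would combine these. For (1)$\Rightarrow$(2): if $M$ is a maximal closed subideal of $H$ then $M\subsetneq H$, so $\lambda_MX\subsetneq\lambda_HX$; and if $\lambda_MX\subseteq U\subsetneq\lambda_HX$ with $U$ open, then $U=\lambda_GX$ for a closed subideal $G$ with $M\subseteq G\subsetneq H$ by the order isomorphism, whence $G=M$ by maximality and $U=\lambda_MX$. Thus $\lambda_MX$ is a maximal proper open subspace, so $\lambda_MX=\lambda_HX\setminus\{x\}$ for some $x\in\lambda_HX$. For (2)$\Rightarrow$(1): if $\lambda_MX=\lambda_HX\setminus\{x\}$ then $M\subsetneq H$, and for any closed subideal $G$ with $M\subseteq G\subseteq H$ the set $\lambda_GX$ lies between $\lambda_HX\setminus\{x\}$ and $\lambda_HX$, hence equals one of the two, so $G=M$ or $G=H$ by Lemma \ref{HJGHF}; therefore $M$ is maximal. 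I do not anticipate a genuine obstacle: the only substantive point is verifying the two-sided order-preservation in the second paragraph, and there Lemma \ref{SHK} does all the heavy lifting.
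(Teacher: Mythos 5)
Your proposal is correct, and its second half ((2)$\Rightarrow$(1)) is essentially the paper's argument: squeeze $\lambda_GX$ between $\lambda_HX\setminus\{x\}$ and $\lambda_HX$ and invoke Lemma \ref{HJGHF}. The first half ((1)$\Rightarrow$(2)) takes a genuinely different route. The paper argues by contradiction: assuming two distinct points $x,y\in\lambda_HX\setminus\lambda_MX$, it builds a Urysohn function $F$ with $F(x)=0$, $F(y)=1$, forms $g=F|_Xh$ with $h_\beta(y)\neq 0$, passes to the closed ideal $\overline{M+\langle g\rangle}$, which properly contains $M$ and so equals $H$ by maximality, and then gets a contradiction at $x$ from $\lambda_HX=\lambda_MX\cup\lambda_{\langle g\rangle}X$ (via Lemmas \ref{JHGF} and \ref{UFRSD}). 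You instead package Lemmas \ref{HJGF}, \ref{HJGHF} and \ref{SHK} into an order isomorphism between closed subideals of $H$ and open subsets of $\lambda_HX$, and then transport maximality through it, reducing everything to the triviality that in a $T_1$ space the maximal proper open sets are exactly the complements of singletons. Your route is cleaner and more reusable (the order-reflection via Lemma \ref{SHK} is the only substantive verification, and it is done correctly), while the paper's route avoids invoking the full correspondence of Lemma \ref{HJGF} and works directly with the sum and closure behavior of $\lambda_GX$. One small point to be aware of: your application of Lemma \ref{SHK} (and of Lemma \ref{HJGHF}) treats a closed subideal of $H$ as a closed ideal of $C_B(X)$; this is exactly the paper's own convention (it does the same in its proof of (2)$\Rightarrow$(1)), so it is not a gap, but it is worth stating explicitly.
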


\begin{proof}
(1) \textit{implies} (2). Note that $\lambda_MX\subseteq\lambda_HX$, and $\lambda_MX\neq\lambda_HX$ by Lemma \ref{HJGHF}, as $M\neq H$. Suppose to the contrary that there are distinct elements $x$ and $y$ in $\lambda_HX\setminus\lambda_MX$. Let $F:\beta X\rightarrow[0,1]$ be a continuous mapping such that $F(x)=0$ and $F(y)=1$. There is some $h$ in $H$ such that $h_\beta(y)\neq 0$ (by the definition of $\lambda_HX$). Let $g=F|_Xh$. Then $g$ is in $H$ but is not in $M$, as $g_\beta(y)=F(y)h_\beta(y)\neq 0$ but $y$ is not in $\lambda_MX$. Let
\[M'=\overline{M+\langle g\rangle},\]
where the bar denotes the closure in $C_B(X)$. Then $M'$ is a closed subideal of $H$ which contains $M$ properly. By maximality of $M$ we have $M'=H$. Therefore
\[\lambda_HX=\lambda_{M'}X=\lambda_{\overline{M+\langle g\rangle}}X=\lambda_{M+\langle g\rangle}X=\lambda_M X\cup\lambda_{\langle g\rangle}X\]
using Lemmas \ref{JHGF} and \ref{UFRSD}. In particular, then $x$ is in $\lambda_{\langle g\rangle}X$ by the way $x$ is chosen. But this is not possible, as
\[k_\beta(x)g_\beta(x)=k_\beta(x)F(x)h_\beta(x)=0\]
for any $k$ in $C_B(X)$. This proves that $\lambda_HX\setminus\lambda_MX$ consists of a single element.

(2) \textit{implies} (1). Let $G$ be a closed subideal of $H$ containing $M$. Then \[\lambda_HX\setminus\{x\}=\lambda_MX\subseteq\lambda_GX\subseteq\lambda_HX,\]
and therefore $\lambda_GX=\lambda_HX$ or $\lambda_GX=\lambda_MX$, depending on whether $\lambda_GX$ contains $x$ or not. By Lemma \ref{JHGF} this implies that $G=H$ or $G=M$, proving the maximality of $M$.
\end{proof}

We also need to make the following definition.

\begin{definition}\label{KJH}
Let $X$ be a completely regular space. Let $H$ be a subset of $C_B(X)$. Any two element $g$ and $h$ of $H$ are called \textit{separated in $H$} if
\[\mathrm{Ann}(g)\cap\mathrm{Stab}(h)\cap H\neq\emptyset\quad\text{and}\quad\mathrm{Ann}(h)\cap\mathrm{Stab}(g)\cap H\neq\emptyset.\]
Here for an $f$ in $C_B(X)$ we let
\[\mathrm{Ann}(f)=\big\{k\in C_B(X):kf=0\big\}\quad\text{and}\quad\mathrm{Stab}(f)=\big\{k\in C_B(X):kf=f\big\}.\]
\end{definition}

We are now a position to prove our theorem.

Recall that for a mapping $f:Y\rightarrow\mathbb{F}$ the \textit{support} of $f$, denoted by $\mathrm{supp}(f)$, is the closure $\mathrm{cl}_Y\mathrm{Coz}(f)$.

\begin{theorem}\label{KJGF}
Let $X$ be a completely regular space. Let $H$ be a non-vanishing closed ideal in $C_B(X)$. The following are equivalent:
\begin{itemize}
\item[(1)] $\mathfrak{sp}(H)$ is totally separated.
\item[(2)] For every two maximal closed subideals $M$ and $N$ of $H$ there are closed ideals $I$ and $J$ in $C_B(X)$ such that
    \[I\subseteq M,\quad J\subseteq N\quad\text{and}\quad H=I\oplus J.\]
\item[(3)] For every separated elements $f$ and $g$ in $H$ there are closed ideals $F$ and $G$ in $C_B(X)$ such that
    \[f\in F,\quad g\in G\quad\text{and}\quad H=F\oplus G.\]
\end{itemize}
\end{theorem}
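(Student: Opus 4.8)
The plan is to prove the two equivalences $(1)\Leftrightarrow(2)$ and $(1)\Leftrightarrow(3)$, each time translating between the topology of $\mathfrak{sp}(H)=\lambda_HX$ and the ideal structure of $H$ by means of the correspondences already established: open subspaces of $\mathfrak{sp}(H)$ versus closed subideals of $H$ (Lemma \ref{HJGF}), separations of $\lambda_GX$ versus direct-sum splittings $G=G_1\oplus G_2$ (the construction in the proof of Lemma \ref{OPF}, in which $G_i=\{g\in G:g_\beta|_{\beta X\setminus U_i}=0\}$), and maximal closed subideals of $H$ versus points of $\lambda_HX$ (Lemma \ref{HGFD}). Throughout I use that $\lambda_HX$ is locally compact and open in $\beta X$, so that every open-and-closed subset $U$ of $\lambda_HX$ is open in $\beta X$ and satisfies $\lambda_GX=U$ for $G=\{h\in H:h_\beta|_{\beta X\setminus U}=0\}$.

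For $(1)\Rightarrow(2)$ I take distinct maximal closed subideals $M,N$ (the case $M=N$ being understood vacuously in the phrase ``two maximal closed subideals''), write $\lambda_MX=\lambda_HX\setminus\{x\}$ and $\lambda_NX=\lambda_HX\setminus\{y\}$ by Lemma \ref{HGFD}, and use total separatedness to split $\lambda_HX=V\sqcup U$ into open-and-closed pieces with $x\in U$ and $y\in V$. Then $I=\{h\in H:h_\beta|_{\beta X\setminus V}=0\}$ and $J=\{h\in H:h_\beta|_{\beta X\setminus U}=0\}$ satisfy $\lambda_IX=V$, $\lambda_JX=U$, and $H=I\oplus J$ by the construction in the proof of Lemma \ref{OPF}; since $\beta X\setminus\lambda_MX=(\beta X\setminus\lambda_HX)\cup\{x\}\subseteq\beta X\setminus V$, every element of $I$ has $\beta$-extension vanishing on $\beta X\setminus\lambda_MX$, so $I\subseteq M$, and symmetrically $J\subseteq N$. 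For $(2)\Rightarrow(1)$, if $\lambda_HX$ fails to be totally separated pick distinct points $x,y$ not separated by any open-and-closed subset of $\lambda_HX$, let $M,N$ be the associated maximal closed subideals, and feed them to $(2)$: the resulting $I\subseteq M$, $J\subseteq N$ with $H=I\oplus J$ give, via Lemma \ref{JHGF} and the usual disjointness argument $ab\in I\cap J=0$, an open-and-closed partition $\lambda_IX\sqcup\lambda_JX$ of $\lambda_HX$ with $x\notin\lambda_IX\subseteq\lambda_MX$ and $y\notin\lambda_JX\subseteq\lambda_NX$, so $\lambda_IX$ separates $y$ from $x$, a contradiction.

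For $(1)\Rightarrow(3)$ I start from separated elements $f,g$ of $H$ together with witnesses $k_1\in\mathrm{Ann}(f)\cap\mathrm{Stab}(g)\cap H$ and $k_2\in\mathrm{Ann}(g)\cap\mathrm{Stab}(f)\cap H$. On $\beta X$, $(k_1)_\beta$ vanishes on $\mathrm{Coz}(f_\beta)$ and equals $1$ on $\mathrm{Coz}(g_\beta)$, hence, by continuity, vanishes on $\mathrm{supp}(f_\beta)$ and equals $1$ on $\mathrm{supp}(g_\beta)$; in particular $\mathrm{supp}(f_\beta)\cap\mathrm{supp}(g_\beta)=\emptyset$. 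Likewise $(k_2)_\beta=1$ on $\mathrm{supp}(f_\beta)$ and $(k_1)_\beta=1$ on $\mathrm{supp}(g_\beta)$, so $\mathrm{supp}(f_\beta)\subseteq\mathrm{Coz}((k_2)_\beta)\subseteq\lambda_HX$ and $\mathrm{supp}(g_\beta)\subseteq\lambda_HX$. Thus $\mathrm{supp}(f_\beta)$ and $\mathrm{supp}(g_\beta)$ are disjoint compact subspaces of the totally separated space $\lambda_HX$, and Lemma \ref{JHGD} yields an open-and-closed partition $\lambda_HX=U\sqcup V$ with $\mathrm{supp}(f_\beta)\subseteq U$ and $\mathrm{supp}(g_\beta)\subseteq V$. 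Taking $F=\{h\in H:h_\beta|_{\beta X\setminus U}=0\}$ and $G=\{h\in H:h_\beta|_{\beta X\setminus V}=0\}$ gives $H=F\oplus G$ (again by the proof of Lemma \ref{OPF}) with $f\in F$ and $g\in G$, since $f_\beta$ and $g_\beta$ vanish off their supports. For $(3)\Rightarrow(1)$ I argue by contraposition: given distinct $x,y$ not separated by an open-and-closed subset of $\lambda_HX$, use compactness of $\beta X$ and openness of $\lambda_HX$ to choose disjoint open sets $O_x\ni x$, $O_y\ni y$ in $\beta X$ with disjoint compact closures contained in $\lambda_HX$, take Urysohn maps $F,G_0:\beta X\to[0,1]$ supported in $O_x$, $O_y$ with $F(x)=G_0(y)=1$, and set $f=F|_X$, $g=G_0|_X$, which lie in $H$ by Lemma \ref{SHK}. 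A second pair of Urysohn maps — equal to $1$ on $\mathrm{cl}_{\beta X}O_y$ (respectively $\mathrm{cl}_{\beta X}O_x$) and to $0$ on $(\beta X\setminus\lambda_HX)\cup\mathrm{cl}_{\beta X}O_x$ (respectively on $(\beta X\setminus\lambda_HX)\cup\mathrm{cl}_{\beta X}O_y$) — restricts to elements of $H$ witnessing that $f$ and $g$ are separated in the sense of Definition \ref{KJH}. If now $H=P\oplus Q$ with $f\in P$, $g\in Q$ for closed ideals $P,Q$, then $\lambda_PX\sqcup\lambda_QX$ is an open-and-closed partition of $\lambda_HX$ with $x\in\mathrm{Coz}(f_\beta)\subseteq\lambda_PX$ and $y\in\mathrm{Coz}(g_\beta)\subseteq\lambda_QX$, contradicting the choice of $x,y$.

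The verifications that the ideals written down are closed subideals with the asserted $\lambda$-sets (Lemmas \ref{HJGF}, \ref{SHK}) and that a complementary pair of open subsets of $\lambda_HX$ induces a genuine $\oplus$-decomposition of $H$ (extracted from the proof of Lemma \ref{OPF}) are routine. The one point requiring care is the translation, in $(1)\Leftrightarrow(3)$, between the algebraic relation of Definition \ref{KJH} and the geometric fact that $\mathrm{supp}(f_\beta)$ and $\mathrm{supp}(g_\beta)$ are disjoint compacta inside $\lambda_HX$; once that is in hand, Lemma \ref{JHGD} drives $(1)\Rightarrow(3)$, while for $(3)\Rightarrow(1)$ one must build the separating elements $f,g$ with supports compact and interior to $\lambda_HX$ precisely so that the required witnesses $k_1,k_2$ can be chosen inside $H$.
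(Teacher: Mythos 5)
Your proposal is correct and follows essentially the same route as the paper: Lemma \ref{HGFD} to translate maximal closed subideals into deleted points of $\lambda_HX$, Lemma \ref{JHGD} applied to the disjoint compacta $\mathrm{supp}(f_\beta)$ and $\mathrm{supp}(g_\beta)$, and the correspondence from Lemmas \ref{HJGF}/\ref{OPF} between open-and-closed splittings of $\lambda_HX$ and decompositions $H=I\oplus J$, with Urysohn functions vanishing off $\lambda_HX$ (via Lemma \ref{SHK}) producing the separated pair in $(3)\Rightarrow(1)$. The only deviations are cosmetic: in $(1)\Rightarrow(2)$ you get $I\subseteq M$ directly from Lemma \ref{SHK} where the paper argues by maximality of $M$ via Lemmas \ref{JHGF} and \ref{UFRSD}, and in $(1)\Rightarrow(3)$ you define $F,G$ by vanishing conditions rather than as $\chi_{(X\cap U)}H$ and $\chi_{(X\setminus U)}H$; both variants are sound.
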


\begin{proof}
(1) \textit{implies} (2). Let $M$ and $N$ be two maximal closed subideals of $H$. By Lemma \ref{HGFD} there are elements $x$ and $y$ in $\lambda_HX$ such that
\[\lambda_MX=\lambda_HX\setminus\{x\}\quad\text{and}\quad\lambda_NX=\lambda_HX\setminus\{y\}.\]
Note that $x\neq y$, as $\lambda_MX\neq\lambda_NX$ by Lemma \ref{HJGHF}, since $M\neq N$. Let $U$ and $V$ be a separation of $\lambda_HX$ containing $x$ and $y$, respectively. By Lemma \ref{HJGF} there are closed ideals $I$ and $J$ of $C_B(X)$ such that
\[\lambda_IX=V\quad\text{and}\quad\lambda_JX=U.\]
Then by (the proofs of) Lemmas \ref{HJGF} and \ref{OPF} we have $H=I\oplus J$. We check that $I\subseteq M$ and $J\subseteq N$. Suppose otherwise that $I\nsubseteq M$. Then $\overline{M+I}$ is a closed ideal of $C_B(X)$ (properly) containing $M$ and contained in $H$. Therefore $H=\overline{M+I}$ by maximality of $M$. We have
\[\lambda_HX=\lambda_{\overline{M+I}}X=\lambda_{M+I}X=\lambda_M X\cup\lambda_IX\]
using Lemmas \ref{JHGF} and \ref{UFRSD}, which is a contradiction, as neither $\lambda_M X$ nor $\lambda_IX$ ($=V$) has $x$. This shows that $I\subseteq M$. A similar argument shows that $J\subseteq N$.

(2) \textit{implies} (1). Let $x$ and $y$ be distinct elements in $\lambda_HX$. The sets $\lambda_HX\setminus\{x\}$ and $\lambda_HX\setminus\{y\}$ are open in $\lambda_HX$ and therefore
\[\lambda_MX=\lambda_HX\setminus\{x\}\quad\text{and}\quad\lambda_NX=\lambda_HX\setminus\{y\}\]
for some closed subideals $M$ and $N$ of $H$, which are maximal closed subideals of $H$ by Lemma \ref{HGFD}. Note that $M\neq N$ by Lemma \ref{HJGHF}, as $\lambda_MX\neq\lambda_NX$. By our assumption there are closed ideals $I$ and $J$ of $C_B(X)$ such that $I\subseteq M$, $J\subseteq N$ and $H=I\oplus J$. By (the proofs of) Lemma \ref{OPF} the pair $\lambda_IX$ and $\lambda_JX$ is a separation for $\lambda_HX$. Note that $\lambda_IX\subseteq\lambda_MX$ and $\lambda_JX\subseteq\lambda_NX$ therefore $x$ is not in $\lambda_IX$ (and is therefore in $\lambda_JX$) and $y$ is not in $\lambda_JX$ (and is therefore in $\lambda_IX$).

(1) \textit{implies} (3). Let $f$ and $g$ be separated elements in $H$. Let
\[\psi\in\mathrm{Ann}(f)\cap\mathrm{Stab}(g)\cap H\quad\text{and}\quad\phi\in\mathrm{Ann}(g)\cap\mathrm{Stab}(f)\cap H.\]
Then $\psi g=g$ and $\psi f=0$. Thus $\psi_\beta g_\beta=g_\beta$ and $\psi_\beta f_\beta=0$, which implies that
\[\mathrm{Coz}(g_\beta)\subseteq\psi_\beta^{-1}(1)\quad\text{and}\quad \mathrm{Coz}(f_\beta)\subseteq\psi_\beta^{-1}(0).\]
In particular $\mathrm{supp}(g_\beta)$ and $\mathrm{supp}(f_\beta)$ are contained in $\psi_\beta^{-1}(1)$ and $\psi_\beta^{-1}(0)$, respectively, and are therefore disjoint. Note that $\mathrm{supp}(g_\beta)$ is contained in $\lambda_HX‎$ (as is contained in $\mathrm{Coz}(\psi_\beta)$). Similarly, $\mathrm{supp}(f_\beta)$ is contained in $\lambda_HX‎$. Therefore, $\mathrm{supp}(f_\beta)$ and $\mathrm{supp}(g_\beta)$ are disjoint compact subspaces  of $\lambda_HX$. By Lemma \ref{JHGD} it follows that
\begin{equation}\label{HFD}
\mathrm{supp}(f_\beta)\subseteq U\quad\text{and}\quad\mathrm{supp}(g_\beta)\subseteq\lambda_HX\setminus U
\end{equation}
for some open-and-closed subspace $U$ of $\lambda_HX$. Let
\[F=\chi_{(X\cap U)}H\quad\text{and}\quad G=\chi_{(X\setminus U)}H,\]
where $\chi$ is to denote the characteristic function on $X$. It is clear that $F$ and $G$ are ideals in $C_B(X)$ and $H=F\oplus G$. Note that $F$ is closed in $C_B(X)$, as any limit of a sequence in $F$ vanishes outside $X\cap U$ (as each sequence term does), and is in $H$, as $H$ is closed in $C_B(X)$. Similarly, $G$ is closed in $C_B(X)$. Also, $f$ is in $F$ and $g$ is in $G$ by (\ref{HFD}), as
\[f=\chi_{(X\cap U)}f\quad\text{and}\quad g=\chi_{(X\setminus U)}g.\]

(3) \textit{implies} (1). Let $x$ and $y$ be distinct elements in ‎‎‎‎‎$\lambda_HX‎$. Note that ‎‎‎‎‎$\lambda_HX‎$ is open in $\beta X$. Let $U$ and $V$ be disjoint open neighborhoods of $x$ and $y$ in $\beta X$, respectively, which are contained in ‎‎‎‎‎$\lambda_HX‎$. Let $U_0$ and $V_0$ be open neighborhoods of $x$ and $y$ in $\beta X$, respectively, whose closures $\mathrm{cl}_{\beta X}U_0$ and $\mathrm{cl}_{\beta X}V_0$ are contained in $U$ and $V$, respectively. There are continuous mappings $f,g:\beta X\rightarrow[0,1]$ such that
\[f(x)=1,\quad f(\beta X\setminus U_0)=0\quad\text{and}\quad g(y)=1,\quad g(\beta X\setminus V_0)=0.\]
Note that $f|_X$ and $g|_X$ are in $H$ by Lemma \ref{SHK}, as $(f|_X)_\beta$ ($=f$) and $(g|_X)_\beta$ ($=g$) both vanish outside ‎‎‎‎‎$\lambda_HX‎$. Since $\beta X$ is a normal space, as is a compact Hausdorff space, by the Urysohn lemma, there are continuous mappings $\psi,\phi:\beta X\rightarrow[0,1]$ such that
\[\psi(\mathrm{cl}_{\beta X}V_0)=1,\quad\psi(\beta X\setminus V)=0\quad\text{and}\quad\phi(\mathrm{cl}_{\beta X}U_0)=1,\quad\phi(\beta X\setminus U)=0.\]
Again by Lemma \ref{SHK}, $\psi|_X$ and $\phi|_X$ are both in $H$. It follows from the definitions that $\phi f=f$ and $\phi g=0$. In particular, $\phi|_X f|_X=f|_X$ and $\phi|_X g|_X=0$. Similarly, $\psi|_X g|_X=g|_X$ and $\psi|_X f|_X=0$. That is
\[\psi|_X\in\mathrm{Ann}(f|_X)\cap\mathrm{Stab}(g|_X)\cap H\quad\text{and}\quad\phi|_X\in\mathrm{Ann}(g|_X)\cap\mathrm{Stab}(f|_X)\cap H,\]
and thus $f|_X$ and $g|_X$ are separated elements in $H$. By our assumption there are ideals $F$ and $G$ in $C_B(X)$ such that
\[f|_X\in F,\quad g|_X\in G\quad\text{and}\quad H=F\oplus G.\]
By (the proofs of) Lemma \ref{OPF} the pair $\lambda_FX$ and $\lambda_GX$ constitutes a separation for ‎‎‎‎‎$\lambda_HX‎$. It also follows from the definitions of $f$ and $g$ (and $\lambda_FX$ and $\lambda_GX$) that $x$ and $y$ are in $\lambda_FX$ and $\lambda_GX$, respectively.
\end{proof}

In our concluding result in this section we consider extremal disconnectedness.

\begin{lemma}\label{HFDS}
Let $X$ be a completely regular space. Let $G_1,\ldots,G_n$ be ideals of $C_B(X)$. Then
\[\lambda_{G_1\cap\cdots\cap G_n}X=\lambda_{G_1}X\cap\cdots\cap\lambda_{G_n}X.\]
\end{lemma}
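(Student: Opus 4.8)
The statement to prove is $\lambda_{G_1\cap\cdots\cap G_n}X=\lambda_{G_1}X\cap\cdots\cap\lambda_{G_n}X$. By an obvious induction it suffices to treat the case $n=2$, so the real content is the identity $\lambda_{G_1\cap G_2}X=\lambda_{G_1}X\cap\lambda_{G_2}X$. The inclusion $\lambda_{G_1\cap G_2}X\subseteq\lambda_{G_1}X\cap\lambda_{G_2}X$ is immediate: if $t\in\lambda_{G_1\cap G_2}X$ then $g_\beta(t)\neq 0$ for some $g\in G_1\cap G_2$, and since $g$ lies in both $G_1$ and $G_2$, the point $t$ lies in both $\lambda_{G_1}X$ and $\lambda_{G_2}X$. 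So the whole task is the reverse inclusion.

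For the reverse inclusion, let $t\in\lambda_{G_1}X\cap\lambda_{G_2}X$. Then there are $g_1\in G_1$ and $g_2\in G_2$ with $(g_1)_\beta(t)\neq 0$ and $(g_2)_\beta(t)\neq 0$. The natural candidate for an element of the intersection ideal witnessing $t\in\lambda_{G_1\cap G_2}X$ is the product $g_1g_2$: since $G_1$ and $G_2$ are ideals of $C_B(X)$, the product $g_1g_2$ lies in $G_1$ (as $G_1$ absorbs multiplication by $g_2$) and in $G_2$ (as $G_2$ absorbs multiplication by $g_1$), hence $g_1g_2\in G_1\cap G_2$. Finally $(g_1g_2)_\beta(t)=(g_1)_\beta(t)(g_2)_\beta(t)\neq 0$ by Lemma \ref{LKHGF}(2), so $t\in\mathrm{Coz}\big((g_1g_2)_\beta\big)\subseteq\lambda_{G_1\cap G_2}X$. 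This proves $\lambda_{G_1}X\cap\lambda_{G_2}X\subseteq\lambda_{G_1\cap G_2}X$, and combined with the trivial inclusion gives the $n=2$ case.

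For the general $n$, proceed by induction: assuming the result for $n-1$, write $G_1\cap\cdots\cap G_n=(G_1\cap\cdots\cap G_{n-1})\cap G_n$ and apply the $n=2$ case together with the inductive hypothesis, using that a finite intersection of ideals is again an ideal so the $n=2$ argument applies verbatim. I do not expect any genuine obstacle here — the only subtlety worth stating carefully is that $g_1g_2$ belongs to the intersection, which relies solely on the (two-sided, hence absorbing on both sides) ideal property of each $G_i$ in the commutative ring $C_B(X)$; everything else is the standard compatibility of $(-)_\beta$ with products.
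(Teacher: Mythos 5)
Your proposal is correct and uses essentially the same argument as the paper: the paper takes the product $g_1\cdots g_n$ directly for general $n$ (rather than reducing to $n=2$ by induction), notes it lies in $G_1\cap\cdots\cap G_n$ by the ideal property, and uses multiplicativity of the extension to $\beta X$ to see it does not vanish at $t$. The induction versus direct product is an inessential packaging difference.
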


\begin{proof}
Let $G=G_1\cap\cdots\cap G_n$. It is clear that $\lambda_GX\subseteq\lambda_{G_i}X$ for any $i=1,\ldots,n$, as $G\subseteq G_i$. Let $t$ be in $\lambda_{G_1}X\cap\cdots\cap\lambda_{G_n}X$. Then $t$ is in $\lambda_{G_i}X$, and therefore $g_i^\beta(t)\neq 0$ for some $g_i$ in $G_i$, for any $i=1,\ldots,n$. Let $g=g_1\cdots g_n$ Then $g$ is in $G$ and $g^\beta(t)=g_1^\beta(t)\cdots g_n^\beta(t)\neq 0$. Thus $t$ is in $\lambda_GX$.
\end{proof}

A non-zero subideal of an ideal $I$ in a ring $R$ is called \textit{essential} if it has a non-zero intersection with every non-zero subideal of $I$.

\begin{theorem}\label{LGFF}
Let $X$ be a completely regular space. Let $H$ be a non-vanishing closed ideal in $C_B(X)$. The following are equivalent:
\begin{itemize}
\item[(1)] $\mathfrak{sp}(H)$ is extremally disconnected.
\item[(2)] Every non-zero closed subideal of $H$ is an essential subideal of a direct summand of $H$.
\end{itemize}
\end{theorem}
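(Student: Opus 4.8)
The plan is to translate the topological statement about extremal disconnectedness of $\mathfrak{sp}(H)$ into the algebraic language of closed subideals via the dictionary established in the previous lemmas, chiefly Lemma \ref{HJGF} (open subspaces $\leftrightarrow$ closed subideals), Lemma \ref{HJGHF} (the correspondence is injective), Lemma \ref{OPF} (connected $\leftrightarrow$ indecomposable), and Lemma \ref{HFDS} together with Lemma \ref{JHGF} (intersections and sums of finitely many $\lambda_{G}X$). Recall that, since $\mathfrak{sp}(H)$ is locally compact Hausdorff, it is extremally disconnected if and only if the closure of every open subspace is open, equivalently, if and only if for every open $U$ the set $\mathrm{cl}_{\mathfrak{sp}(H)}U$ is open-and-closed and is the smallest open-and-closed set containing $U$. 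The first task will be to identify, for a closed subideal $G$ of $H$ with $U=\lambda_{G}X$, what $\mathrm{cl}_{\mathfrak{sp}(H)}U$ corresponds to on the algebraic side: it should be the smallest direct summand $D$ of $H$ with $G\subseteq D$, and the assertion ``$G$ is essential in $D$'' should encode exactly that $\lambda_{G}X$ is dense in $\lambda_{D}X$ (so that $\mathrm{cl}\,\lambda_{G}X=\lambda_{D}X$ when the latter is closed).

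For the direction (1) $\Rightarrow$ (2): let $G$ be a non-zero closed subideal of $H$, put $U=\lambda_{G}X$ (open in $\mathfrak{sp}(H)$ by Lemma \ref{HJGF}), and let $W=\mathrm{cl}_{\mathfrak{sp}(H)}U$, which is open-and-closed by hypothesis. Then $W$ and its complement $\mathfrak{sp}(H)\setminus W$ form a separation of $\mathfrak{sp}(H)$; applying Lemma \ref{HJGF} to each piece gives closed ideals $D$ and $D'$ of $C_B(X)$ with $\lambda_{D}X=W$, $\lambda_{D'}X=\mathfrak{sp}(H)\setminus W$, and then (as in the proof of Lemma \ref{OPF}, (2)$\Rightarrow$(1) read backwards, or directly via Lemmas \ref{JHGF} and \ref{HFDS}) one gets $H=D\oplus D'$, so $D$ is a direct summand of $H$ containing $G$ (since $U\subseteq W$, Lemma \ref{HJGHF} forces $G\subseteq D$). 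It remains to show $G$ is essential in $D$: take any non-zero closed subideal $K$ of $D$; then $\lambda_{K}X$ is a non-empty open subset of $\lambda_{D}X=W=\mathrm{cl}_{\mathfrak{sp}(H)}U$, hence meets the dense set $U=\lambda_{G}X$, so $\lambda_{K}X\cap\lambda_{G}X\neq\emptyset$; picking $t$ in this intersection and $k\in K$, $g\in G$ with $k_\beta(t)\neq 0\neq g_\beta(t)$, the product $kg$ lies in $K\cap G$ and $(kg)_\beta(t)\neq 0$, so $K\cap G\neq 0$, as required.

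For the direction (2) $\Rightarrow$ (1): given an open subspace $U$ of $\mathfrak{sp}(H)$, use Lemma \ref{HJGF} to write $U=\lambda_{G}X$ for a closed subideal $G$ of $H$; we may assume $G\neq 0$ (if $U=\emptyset$ there is nothing to prove, as $\mathrm{cl}\,\emptyset=\emptyset$ is open). By hypothesis $G$ is an essential subideal of a direct summand $D$ of $H$, say $H=D\oplus D'$. Then $\lambda_{D}X$ is open-and-closed in $\mathfrak{sp}(H)$: it is open by Lemma \ref{HJGF}, and its complement is $\lambda_{D'}X$, also open, because $\lambda_{D}X\cup\lambda_{D'}X=\lambda_{H}X=\mathfrak{sp}(H)$ by Lemma \ref{JHGF} while $\lambda_{D}X\cap\lambda_{D'}X=\lambda_{D\cap D'}X=\lambda_{0}X=\emptyset$ by Lemma \ref{HFDS}. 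So it suffices to show $\mathrm{cl}_{\mathfrak{sp}(H)}U=\lambda_{D}X$; since $U\subseteq\lambda_{D}X$ and the latter is closed, we need only that $U$ is dense in $\lambda_{D}X$. If not, there is a non-empty open $V\subseteq\lambda_{D}X$ disjoint from $U$; realize $V=\lambda_{K}X$ for a closed subideal $K$ of $H$ (Lemma \ref{HJGF}), note $K\subseteq D$ (as $\lambda_{K}X=V\subseteq\lambda_{D}X$ forces every element of $K$, vanishing outside $V$, into $D$ by Lemma \ref{SHK}), and $K\neq 0$ since $V\neq\emptyset$; but $\lambda_{K\cap G}X=\lambda_{K}X\cap\lambda_{G}X=V\cap U=\emptyset$ by Lemma \ref{HFDS}, so $K\cap G=0$, contradicting essentiality of $G$ in $D$.

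I expect the main obstacle to be bookkeeping rather than ideas: one must be careful that ``direct summand of $H$'' is meant in the ring-theoretic sense inside $C_B(X)$ (so that Lemma \ref{OPF}'s construction applies and the summand is automatically a closed ideal of $C_B(X)$), and one must check at each stage that the closed subideals produced by Lemma \ref{HJGF} are the \emph{right} ones — in particular that passing between $U$, $\mathrm{cl}\,U$, and the associated ideals is consistent, which is exactly what Lemma \ref{HJGHF} guarantees. A secondary delicate point is the equivalence, for an open-and-closed $W\supseteq U$, between ``$W=\mathrm{cl}\,U$'' and ``the subideal corresponding to $U$ is essential in the summand corresponding to $W$''; the density argument above handles both implications, but one should phrase it so that it does not implicitly assume extremal disconnectedness when proving (2)$\Rightarrow$(1).
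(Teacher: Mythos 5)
Your overall strategy matches the paper's: in (1)$\Rightarrow$(2) you take $W=\mathrm{cl}_{\mathfrak{sp}(H)}\lambda_GX$, manufacture a direct summand whose $\lambda$-image is $W$, and deduce essentiality from density of $\lambda_GX$ in $W$; in (2)$\Rightarrow$(1) you show $\mathrm{cl}_{\mathfrak{sp}(H)}\lambda_GX=\lambda_DX$ by the same density/intersection argument. The only real difference is how the summand is built in (1)$\Rightarrow$(2): you apply Lemma \ref{HJGF} to $W$ and its complement and invoke the decomposition argument from the proof of Lemma \ref{OPF}, while the paper constructs it explicitly as $\{h\chi_M:h\in H\}$ with $M=X\cap\mathrm{cl}_{\lambda_HX}U$ and verifies $\lambda_IX=\mathrm{cl}_{\lambda_HX}U$ by hand; both routes work. (One caution: your parenthetical alternative ``directly via Lemmas \ref{JHGF} and \ref{HFDS}'' would only give $\overline{D+D'}=H$, not $H=D\oplus D'$; you do need the Cauchy-sequence argument from Lemma \ref{OPF}'s proof, which is your primary citation, so this is fine as long as you rely on it.)

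The genuine gap is in (2)$\Rightarrow$(1): when you conclude $K\subseteq D$ ``by Lemma \ref{SHK}'', that lemma requires $D$ to be a \emph{closed} ideal of $C_B(X)$, and hypothesis (2) only provides a direct summand $D$ of $H$, which is not closed by fiat. You assert in your closing paragraph that the summand is ``automatically'' a closed ideal, but that is precisely a point requiring proof, and the paper devotes a paragraph to it: writing $H=D\oplus D'$, one has $\lambda_{\overline{D}}X=\lambda_DX$ by Lemma \ref{UFRSD}, which is disjoint from $\lambda_{D'}X$ (you proved this via Lemma \ref{HFDS}), hence $\overline{D}\cap D'=0$ by Lemma \ref{HFDS} again; then any $f\in\overline{D}$ written as $f=d+d'$ satisfies $d'=f-d\in\overline{D}\cap D'=0$, so $\overline{D}=D$. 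You have every lemma needed for this, but as written the step is missing and Lemma \ref{SHK} is applied to an ideal not known to be closed. Two smaller slips: Lemma \ref{HJGHF} does not ``force $G\subseteq D$'' (it converts equality of $\lambda$-images into equality of closed ideals); the containment follows from the explicit formula for $D$ in Lemma \ref{HJGF}, or from Lemma \ref{SHK}. And in (1)$\Rightarrow$(2) you test essentiality only against \emph{closed} non-zero subideals $K$, whereas the definition requires all non-zero subideals; fortunately your argument (non-empty $\lambda_KX$ meets the dense set $\lambda_GX$, so some product $kg\neq 0$ lies in $K\cap G$) never uses closedness of $K$, so simply delete that word.
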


\begin{proof}
(1) \textit{implies} (2). Let $G$ be a non-zero closed subideal of $H$. Let $U=\lambda_GX$. Then $U$ is open in $\lambda_HX$ and, therefore, by our assumption, its closure in $\lambda_HX$ is open in $\lambda_HX$. Let $M=X\cap\mathrm{cl}_{\lambda_HX}U$. Note that $M$ is both open and closed in $X$. Define
\[I=\{h\chi_M:h\in H\}\quad\text{and}\quad J=\{h\chi_{(X\setminus M)}:h\in H\},\]
where $\chi$ denotes the characteristic function. Then $I$ and $J$ are subideals of $H$ and $H=I\oplus J$, as one can easily check. (Indeed, it is trivial that $I\cap J=0$, and $H=I+J$, as
\[h=h\chi_M+h\chi_{(X\setminus M)}\]
for any $h$ in $H$.) We show that $G$ is an essential subideal of $I$, that is, $G$ is a subideal of $I$ which has a non-zero intersection with every non-zero subideal of $I$.

First, we show that
\begin{equation}\label{JKHG}
\lambda_IX=\mathrm{cl}_{\lambda_HX}U.
\end{equation}
We have
\begin{eqnarray*}
\lambda_IX&=&\bigcup_{h\in H}\mathrm{Coz}(h^\beta\chi_M^\beta)\\&=&\bigcup_{h\in H}\big(\mathrm{Coz}(h^\beta)\cap\mathrm{Coz}(\chi_M^\beta)\big)\\&=&\mathrm{Coz}(\chi_M^\beta)\cap\bigcup_{h\in H}\mathrm{Coz}(h^\beta)=\mathrm{Coz}(\chi_M^\beta)\cap\lambda_HX.
\end{eqnarray*}
Note that since $M$ is both open and closed in $X$, its closure $\mathrm{cl}_{\beta X}M$ is open and closed in $\beta X$. In particular, $\chi_{\mathrm{cl}_{\beta X}M}$ is a continuous mapping on $\beta X$, which coincides with $\chi_M^\beta$ (as the two mappings agree on the dense subspace $X$ of $\beta X$). Therefore
\[\mathrm{Coz}(\chi_M^\beta)=\mathrm{Coz}(\chi_{\mathrm{cl}_{\beta X}M})=\mathrm{cl}_{\beta X}M.\]
Using this, it now follows from the above relations that
\[\lambda_IX=\mathrm{cl}_{\beta X}M\cap\lambda_HX=\mathrm{cl}_{\lambda_HX}M=\mathrm{cl}_{\lambda_HX}(X\cap\mathrm{cl}_{\lambda_HX}U)=\mathrm{cl}_{\lambda_HX}U,\]
where the latter equality holds because $\mathrm{cl}_{\lambda_HX}U$ is open in $\lambda_HX$ and $X$ is dense in $\lambda_HX$. This shows (\ref{JKHG}).

Now, we show that $G$ is contained in $I$. So, let $g$ be in $G$. Then $\mathrm{Coz}(g^\beta)\subseteq\lambda_GX$ (and the latter set is $U$). Intersecting with $X$, we have $\mathrm{Coz}(g)\subseteq U\cap X\subseteq M$. Thus $g=g\chi_M$, and therefore $g$ is in $I$. Thus $G$ is a subideal of $I$.

Next, we show that $G$ has a non-zero intersection with any non-zero subideal of $I$. So, let $K$ be a non-zero subideal of $I$. It is clear that $\lambda_KX\subseteq\lambda_IX$. Thus $\lambda_KX$, being a non-empty open subspace of $\lambda_HX$ which is contained in $\mathrm{cl}_{\lambda_HX}U$, intersects $U$ (and the latter set is $\lambda_GX$). That is $\lambda_KX\cap\lambda_GX\neq\emptyset$. But $\lambda_KX\cap\lambda_GX=\lambda_{K\cap G}X$ by Lemma \ref{HFDS}. Therefore $K\cap G\neq 0$.

(2) \textit{implies} (1). Let $U$ be an open subspace of $\lambda_HX$. By Lemma \ref{HJGF} there is a closed subideal $G$ of $H$ such that $\lambda_GX=U$. We may assume that $U$ is non-empty, and consequently, $G$ is non-zero. Then, by our assumption $G$ is an essential subideal of a direct summand $J$ of $H$. Note that $\lambda_JX$ is both open and closed in $\lambda_HX$ by an argument similar to the one in the proof of Lemma \ref{OPF}. We show that
\begin{equation}\label{PPOD}
\mathrm{cl}_{\lambda_HX}U=\lambda_JX;
\end{equation}
this will conclude the proof. Clearly, $\lambda_JX$ contains $U$ (as $J$ contains $G$), and therefore contains its closure $\mathrm{cl}_{\lambda_HX}U$. That is $\mathrm{cl}_{\lambda_HX}U\subseteq\lambda_JX$. We check that the reverse inclusion
\begin{equation}\label{PHD}
\lambda_JX\subseteq\mathrm{cl}_{\lambda_HX}U
\end{equation}
holds as well.

First, we check that $J$ is closed in $C_B(X)$. Let $I$ be a subideal of $H$ such that $H=I\oplus J$. Then, as argued in the proof of Lemma \ref{OPF}, the pair $\lambda_IX$ and $\lambda_JX$ form a separation for $\lambda_HX$, and are in particular disjoint. Note that $\lambda_{\overline{J}}X=\lambda_JX$ by Lemma \ref{UFRSD}, where the bar denotes the closure in $C_B(X)$. Therefore, using Lemma \ref{HFDS}, we have
\[\lambda_{I\cap\overline{J}}X=\lambda_IX\cap\lambda_{\overline{J}}X=\lambda_IX\cap\lambda_JX=\emptyset.\]
This implies that $I\cap\overline{J}=0$. Now, let $f$ be in $\overline{J}$. Then $f=i+j$ for some $i$ in $I$ and $j$ in $J$. But $i=f-j$, and thus $i$ is in $I\cap\overline{J}$ and is therefore $0$. That is, $f=j$, and thus $f$ is in $J$. This shows that $J$ is closed in $C_B(X)$.

Now, to check (\ref{PHD}), let $t$ be in $\lambda_JX$. Let $V$ be an open neighborhood of $t$ in $\lambda_HX$. Then $\lambda_JX\cap V$ is an open neighborhood of $t$ in $\lambda_HX$. By Lemma \ref{HJGF}, there is a closed subideal $K$ of $H$ such that $\lambda_KX=\lambda_JX\cap V$. Note that $K\subseteq J$, as we now check. Let $f$ be in $K$. Then $f_\beta$ vanishes outside $\lambda_KX$, and thus vanishes outside $\lambda_JX$, as $\lambda_KX\subseteq\lambda_JX$. Thus $f$ is in $J$ by Lemma \ref{SHK}, because $J$ is closed in $C_B(X)$. Also, $K$ is non-zero, as $\lambda_KX$ is non-empty, since contains $t$. That is, $K$ is a non-zero subideal of $J$. By our assumption then $K\cap G\neq 0$. By Lemma \ref{HFDS} we have $\lambda_{K\cap G}X=\lambda_KX\cap\lambda_GX$. Therefore $\lambda_KX\cap\lambda_GX\neq\emptyset$. Observe that $\lambda_KX\cap\lambda_GX\subseteq V\cap U$. Therefore $V\cap U\neq\emptyset$. Thus $t$ is in $\mathrm{cl}_{\lambda_HX}U$. This shows (\ref{PHD}), and consequently (\ref{PPOD}), concluding the proof.
\end{proof}

\end{document}